\newtheorem{obs}{Observation}
\newtheorem{alg}{Algorithm}
\def\vs{\vspace}
\newtheorem{theorem}{Theorem}[section]
\newtheorem{lemma}[theorem]{Lemma}
\newtheorem{example}{Example}
\title{A modification of the Jacobi-Davidson  method
\thanks{Research supported by CSIR India(09/084(0563)/2010
EMR-I) and NBHM India(2/40(3)/2016/R\&D-II/9602)}}
\author{Mashetti Ravibabu\\Department of Computational and Data Sciences,\\ Indian Institute of Science, Bengaluru,\\India-560012.\thanks{mashettiravibabu2@gmail.com}}
\begin{document}
\maketitle
\noindent Each iteration in Jacobi-Davidson method for solving large sparse eigenvalue problems involves two phases, called subspace expansion and eigen pair extraction. The subspace expansion phase involves solving a correction equation. We propose a modification to this by introducing a related correction equation, motivated by the least squares. We call the proposed method as the Modified Jacobi-Davidson Method. When the subspace expansion is ignored as in the Simplified Jacobi-Davidson Method, the modified method is called as Modified Simplified Jacobi-Davidson Method. We analyze the convergence properties of the proposed method for Symmetric matrices. Numerical experiments have been carried out to check whether the method is computationally viable or not.\\

\noindent \textbf{keywords:}Jacobi-Davidson method, Subspace expansion, Eigen pair extraction
\section{Introduction}
Projection methods are widely used for solving large sparse eigenvalue problems. Jacobi-Davidson
method is a quite well known projection method that approximates the smallest eigenvalue or eigenvalue
near a given shift of a symmetric matrix. This method starts with an arbitrary initial nonzero vector and involves two phases. In the first phase, the eigenvector and eigenvalue approximations are obtained from the 
existing subspace by applying a projection to the given matrix.  From these approximate eigenvalues, we select the one with desired properties  and its corresponding eigenvector approximation. The second phase involves solving the correction equation to obtain a vector from the orthogonal complement of the selected eigenvector
approximation. An existing subspace is expanded by adding this vector after orthogonalizing it against the existing subspace. In the proposed method, we modify the correction equation using least squares. A vector is determined from the orthogonal complement of a selected eigenvector approximation so that the norm of the residual associated with  the resultant of the selected eigenvector approximation and the vector determined from its orthogonal complement is minimum.
 
In Section 2, we briefly review Jacobi-Davidson method. In Section 3, a new correction equation is introduced based on least squares heuristics, and convergence properties of the proposed method are discussed. In Section 4, we report the results of numerical experiments which were carried out to understand the viability of the method developed in Section 3. In Section~5, we consider the method used with restarting, and report the results of numerical examples. Section~6 concludes the paper. 

\section{Jacobi-Davidson and simplified Jacobi-Davidson method}
We briefly describe Jacobi-Davidson method. Let $A$ be a given matrix. Assume that we have already computed a
matrix $V_k$ with $k$ orthonormal columns, which span the existing subspace of dimension $k$, starting with an initial nonzero vector $v_1$. Approximations to eigenvalues and eigenvectors of $A$ are obtained from those of the matrix $H_k = V_k^\ast A V_k$ as follows. If $(\theta,y)$ is an eigen pair of $H_k$, then $\theta$ is called a Ritz value and $u = V_ky$ is called a Ritz vector, and the associated residual is $r = Au-\theta u$. Since $V_k$ has orthonormal columns, $V_k^\ast r = V_k^\ast AV_ky -\theta V_ky =0$. Thus, the residual is orthogonal to the existing subspace. We then select one of the Ritz values with desired properties and its corresponding Ritz vector. Using this vector, we solve the following correction equation, for the vector $t$:
\begin{equation}\label{equn1}\relax
(I-uu^\ast )(A-\theta I) (I-uu^\ast)t = -r,~~ t \perp u
\end{equation}
\noindent The vector $t$ is orthogonalized with respect to the existing subspace of dimension $k$ spanned by $k$ orthonormal columns of the matrix $V_k$ and then normalized to get the vector $$v_{k+1} = \frac{(I-V_kV_k^\ast
)t}{\|(I-V_kV_k^\ast )t\|}$$ Then $v_{k+1}$ is added to the existing subspace of
dimension $k$ and $V_k$ is updated to $V_{k+1} = [V_k~~, v_{k+1}]$. An eigen pair of $H_{k+1}$ is then taken as an improved eigen pair. Since $A$ is very large and sparse, Equation (\ref{equn1}) is solved using
a Krylov subspace method for linear systems, such as GMRES or FOM. The algorithm is continued until the norm of the residual satisfies some fixed tolerance, that is, $\|r|\|_2 \leq tol$.\\
\paragraph*{} In this process, if we ignore the subspace expansion, then the method is called the Simplified Jacobi-Davidson method. Here, for an eigenvector approximation $u$  and the solution $t$ of the correction
equation (\ref{equn1}), the vector $\frac{u+t}{\|u+t\|}$ is considered as  the new eigenvector approximation,  The process is continued by replacing $u$ and $\theta$ in the correction equation (\ref{equn1}), with $\frac{u+t}{\|u+t\|}$ and the Rayleigh quotient of the vector $\frac{u+t}{\|u+t\|},$ respectively. The process
terminates when norm of the residual vector satisfies a pre-set tolerance. It differs from Jacobi-Davidson method by ignoring the information from previous vectors, where as in Jacobi-Davidson method, a subspace is formed from these vectors.

\section{Modification to the Simplified Jacobi-Davidson method}
As we have seen in Section 2, Simplified Jacobi Davidson method uses at each step, the solution of the
correction equation (\ref{equn1}),  where $u$ is a known eigenvector approximation, $\theta $ is
its Rayleigh quotient with respect to the matrix $A,$ and the vector $$\frac{u+(I-uu^\ast)t}{\|u+(I-uu^\ast)t\|}$$ is the new eigenvector approximation. It is a general belief that the matrix $(I-uu^\ast )(A-\theta I) (I-uu^\ast)$ is well-conditioned compared to  the matrix $(A-\theta I);$ but this is not correct as shown in the following Example.
\begin{example}\label{example1}\relax
Consider the matrix
$$
A = \begin{bmatrix}
    1 &0 &0&0 \\
    0 &0 &2&0 \\
    0 &2 &0&0 \\
    0 &0 &0&1 \\
    \end{bmatrix}
$$
Take the vector $u$ as $(0~0~1~0)^{t}$. Then $\theta = u^\ast Au = 0,$ $r = (A-\theta I)u = (0~2~0~0)^{t},$ and 
\begin{equation*}\label{equn42}\relax
B: = (I-uu^\ast )(A-\theta I) (I-uu^\ast) = \begin{bmatrix}
    1 &0 &0&0\\
    0 &0 &0&0\\
    0 &0 &0&0\\
    0 &0 &0&1\\
    \end{bmatrix}
\end{equation*}

Here, $A-\theta I$ is non-singular but the matrix $B$ is singular. Further, $-r$ is not in the range space of $B$. So there exists no vector $t$ satisfying the correction equation (\ref{equn1}).
\end{example}
 Example~1 illustrates the situation  where the correction equation has  no solution. To avoid this, we 
propose the following method of choosing a new eigenvector approximation. We call this method as the \textit{Modified Simplified Jacobi Davidson} method (MSJD). It has the following two steps. \begin{alg}\label{alg2}\title{\textbf{MSJD}}\relax
\textsf{Step 1: For a given eigenvector approximation $u $ of a matrix $A$, find
a vector $t$ that minimizes $$\|(A-\theta I)u+(A-\theta I)(I-uu^\ast
)t\|^2$$ where $\theta$ is the Rayleigh quotient of $u$ with respect to the matrix $A$. \\
Step 2: Take the new eigenvector approximation as ~~$\dfrac{u+(I-uu^\ast
)t}{\|u+(I-uu^\ast )t \|}$.}
\end{alg}
In Step 1, the vector $t$ is determined by solving the following normal equation:
\begin{equation}\label{equn43}\relax
(I-uu^\ast )(A-\theta I)^\ast (A-\theta I) (I-uu^\ast)t =
-(I-uu^\ast )(A-\theta I)^\ast (A-\theta I)u
\end{equation}
Notice that, if $(A-\theta I)$ is symmetric, then (\ref{equn43}) can be obtained from (\ref{equn1}) by replacing the matrix $(A-\theta I)$ with $(A-\theta I)^2,$ and then applying the orthogonal projection $(I-uu^\ast )$ to both the sides. 

The linear system ~(\ref{equn43}) may be solved by well known methods such as Gaussian elimination, Conjugate gradients, and etc. When $\theta $ is very close to an eigenvalue $\lambda $ of $A$, the matrix $(I-uu^\ast )(A-\theta I)^2(I-uu^\ast )$ is expected to be more ill-conditioned than $(I-uu^\ast )(A-\theta I)(I-uu^\ast )$. Such ill-conditioned problems can be solved by using well known regularization techniques, for example, Tikhonov Regularization.  Tikhonov regularization method, instead of solving the normal equations,  requires the solution of the following equation: 
\begin{equation*}\label{equn44}\relax
(I-uu^\ast )(A-\theta I)^\ast (A-\theta I) (I-uu^\ast)t  +h^2 t =
-(I-uu^\ast )(A-\theta I)^\ast (A-\theta I)u
\end{equation*}
where $h^2$ is a parameter chosen in such a way that as $h$ tends to zero, the  solution of the above equation converges to the solution of the least squares problem in Step 1. This is equivalent to finding the vector that
minimizes the functional 
\begin{equation*}\label{equn45}\relax
\|(A-\theta I)u+(A-\theta I)(I-uu^\ast )t\|^2+h^2\|t\|^2
\end{equation*}

Here we propose a new way, which avoids problems associated with parameter selection. For this purpose, we rewrite Equation~(\ref{equn43}) as 
\begin{equation}\label{equn46}\relax
(I-uu^\ast )(A-\theta I)^\ast (A-\theta I)(u+(I-uu^\ast )t) = 0
\end{equation}

When $\theta$ is close to an eigenvalue of $A$, the system matrix in Equation (\ref{equn46}) is ill-conditioned. To avoid ill-conditioning, perturb the matrix \linebreak 
$(A-\theta I)^\ast (A-\theta I)$ by a matrix $E$, and write the perturbed equation as 
\begin{equation}\label{equn46a1}\relax
(I-uu^\ast )\big((A-\theta I)^\ast (A-\theta
I)+E\big)(u+(I-uu^\ast)t) =  0
\end{equation}
We require that $\|(A-\theta I)u+(A-\theta
I)(I-uu^\ast )t\|$ be minimum. Using Equation~(\ref{equn43}), we simplify (\ref{equn46a1}) to obtain
\begin{equation}\label{equn46a2}\relax
(I-uu^\ast)E(u+(I-uu^\ast)t = 0
\end{equation}
 Matrices that transform the vector $u+(I-uu^\ast )t$ to a vector parallel to $u$ are possible candidates for $E$ in Equation~(\ref{equn46a2}). Also,  $E ~:=~ kuw^\ast$  for any scalar $k$ and any vector $w,$ satisfies (\ref{equn46a2}). For the choice $w = u$, the matrix $E$ becomes symmetric and it may have computational advantages. However this simple choice is not possible, in general. 
\begin{obs}\label{obs1}\relax
The choice $w = u$ is not possible unless $(\theta, (I-uu^\ast )t)$ is an eigen pair of $A$ or $u$ is a right singular vector of $(A-\theta I)$.
\end{obs}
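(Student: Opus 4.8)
The plan is to test the symmetric candidate directly: set $w=u$, so $E=kuu^{\ast}$, and ask what must hold for this $E$ to actually realize the regularized relation rather than merely satisfy~(\ref{equn46a2}) vacuously. The first step is to exploit that $u$ is normalized. Since $\|u\|=1$ we have $(I-uu^{\ast})u=0$ and $u^{\ast}\bigl(u+(I-uu^{\ast})t\bigr)=1$, hence $E\bigl(u+(I-uu^{\ast})t\bigr)=ku^{\ast}\bigl(u+(I-uu^{\ast})t\bigr)u=ku$. Thus the symmetric perturbation is \emph{blind to} $u^{\perp}$: it leaves every vector orthogonal to $u$ untouched and can only rescale the $u$-direction. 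Requiring $u+(I-uu^{\ast})t$ to lie in the kernel of the perturbed operator, $\bigl((A-\theta I)^{\ast}(A-\theta I)+E\bigr)\bigl(u+(I-uu^{\ast})t\bigr)=0$, therefore becomes $(A-\theta I)^{\ast}(A-\theta I)\bigl(u+(I-uu^{\ast})t\bigr)=-ku$; that is, this vector must already be parallel to $u$.

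The second step is to project this requirement onto $u^{\perp}$, which removes the free scalar $k$ and isolates the genuine constraint:
\[
(I-uu^{\ast})(A-\theta I)^{\ast}(A-\theta I)u \;=\; -(I-uu^{\ast})(A-\theta I)^{\ast}(A-\theta I)(I-uu^{\ast})t .
\]
I would then read off the two stated conditions as the two ways this identity can hold. If $(A-\theta I)(I-uu^{\ast})t=0$, the right-hand side vanishes; equivalently $A(I-uu^{\ast})t=\theta (I-uu^{\ast})t$, so $\bigl(\theta,(I-uu^{\ast})t\bigr)$ is an eigenpair of $A$, which is the first alternative. If instead $(A-\theta I)^{\ast}(A-\theta I)u$ is parallel to $u$, the left-hand side vanishes; this says exactly that $u$ is an eigenvector of $(A-\theta I)^{\ast}(A-\theta I)$, i.e. a right singular vector of $A-\theta I$, which is the second alternative. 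In either case the $u^{\perp}$-identity holds and $k$ is then fixed from the $u$-component, so $w=u$ is admissible.

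The hard part, and the core of the proof, is showing that these two degenerate situations are the \emph{only} ones --- that a sustained partial cancellation between the two orthogonal-complement terms is impossible, so that one of the two terms must vanish on its own. Here I would bring in that $t$ is not arbitrary but is pinned by the least-squares normal equation~(\ref{equn43}), $(I-uu^{\ast})(A-\theta I)^{\ast}(A-\theta I)(I-uu^{\ast})t=-(I-uu^{\ast})(A-\theta I)^{\ast}(A-\theta I)u$, and substitute this characterization back into the projected requirement; the $u^{\perp}$-blindness of $E=kuu^{\ast}$ established in the first step is exactly what prevents the perturbation from absorbing any residual orthogonal component, forcing the dichotomy. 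I expect the only delicate bookkeeping to be the cross term $u^{\ast}(A-\theta I)^{\ast}(A-\theta I)(I-uu^{\ast})t$ and keeping $(A-\theta I)^{\ast}$ rather than $A-\theta I$ throughout, so that the singular-vector --- not eigenvector --- formulation of the second alternative comes out correctly for non-symmetric $A$, with the symmetric case collapsing both alternatives into statements about eigenvectors of $A$.
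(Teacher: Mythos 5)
There is a genuine gap, and it sits exactly where you located the ``hard part.'' Your projected requirement --- obtained by applying $(I-uu^\ast)$ to the kernel condition --- is nothing other than the normal equation~(\ref{equn43}) itself: with $E=kuu^\ast$ the term $(I-uu^\ast)E\big(u+(I-uu^\ast)t\big)$ vanishes identically, for every $t$ and every $k$, so Equation~(\ref{equn46a2}) is satisfied automatically and the perturbed equation~(\ref{equn46a1}) collapses to the unperturbed equation~(\ref{equn46}). Consequently your plan to ``substitute the normal equation back into the projected requirement'' is substituting an equation into itself; it yields no new constraint and cannot force either side to vanish separately. Indeed, under your reading of ``possible'' (the least-squares $t$ plus some scalar $k$ satisfies the kernel condition), the choice $w=u$ is always admissible: the normal equation already makes $(A-\theta I)^\ast(A-\theta I)\big(u+(I-uu^\ast)t\big)$ parallel to $u$, so taking $k=-u^\ast(A-\theta I)^\ast(A-\theta I)\big(u+(I-uu^\ast)t\big)$ does the job. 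So in your framework the dichotomy you want to prove is not merely hard, it is false; no bookkeeping of cross terms can rescue it.

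The missing idea is the additional constraint the paper extracts from minimality. Starting from Equation~(\ref{equn46a3}), which expresses $\|(A-\theta I)(u+(I-uu^\ast)t)\|^2$ as $-(u+(I-uu^\ast)t)^\ast E(u+(I-uu^\ast)t)+k_1$, the paper requires that the gradient with respect to $t$ of the right-hand side vanish at the minimizer. Combined with (\ref{equn46a2}) this gives $(I-uu^\ast)\big((A-\theta I)^\ast(A-\theta I)+E\big)u=0$; subtracting this from (\ref{equn46a1}) gives $(I-uu^\ast)\big((A-\theta I)^\ast(A-\theta I)+E\big)(I-uu^\ast)t=0$; and for $E=kuu^\ast$ the $E$-term drops under the projection, leaving $(I-uu^\ast)(A-\theta I)^\ast(A-\theta I)(I-uu^\ast)t=0$. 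Taking the inner product with $t$ gives $\|(A-\theta I)(I-uu^\ast)t\|=0$, and only now does the dichotomy appear: either $(I-uu^\ast)t\neq 0$ and $(\theta,(I-uu^\ast)t)$ is an eigen pair of $A$, or $(I-uu^\ast)t=0$, in which case (\ref{equn46}) reduces to $(I-uu^\ast)(A-\theta I)^\ast(A-\theta I)u=0$, i.e.\ $u$ is a right singular vector of $A-\theta I$. A secondary point: your middle paragraph argues sufficiency (each alternative makes $w=u$ admissible), whereas the observation asserts necessity; without the gradient condition above, that necessity direction --- the whole content of the statement --- is out of reach.
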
 
\begin{proof}
Rewrite Equation~(\ref{equn46a1}) as
$$\big((A-\theta I)^\ast (A-\theta I)+E\big)(u+(I-uu^\ast )t) = k_1u$$ 
where $k_1 := u^\ast\big( (A-\theta I)^\ast (A-\theta I)+E\big)(u+(I-uu^\ast )t)$ is a scalar. Taking the inner product with $u+(I-uu^\ast )t,$ we obtain
\begin{eqnarray*} 
 \|(A-\theta I)(u+(I-uu^\ast)t)\|^2+(u+(I-uu^\ast )t)^\ast E(u+(I-uu^\ast)t) \\ 
 =  k_1(u+(I-uu^\ast )t)^\ast u
\end{eqnarray*}
Since $\|u\| = 1$, this equation is simplified to
\begin{equation}\label{equn46a3}\relax
\|(A-\theta I)(u+(I-uu^\ast)t)\|^2 = -(u+(I-uu^\ast )t)^\ast E(u+(I-uu^\ast)t)+k_1
\end{equation} 
In order that the left hand side of Equation~(\ref{equn46a3}) is minimum over all vectors $t$, the gradient of the right hand side of Equation~(\ref{equn46a3}) with respect to the vector $t$ vanishes. Therefore, 
\begin{equation*}\label{equn46a4}\relax
-2(I-uu^\ast)E(u+(I-uu^\ast)t+(I-uu^\ast ) \big((A-\theta I)^\ast (A-\theta I)+E\big )u = 0
\end{equation*}
Using Equation (\ref{equn46a2}), we have $(I-uu^\ast ) \big((A-\theta I)^\ast (A-\theta I)+E\big )u = 0$. This together with Equation (\ref{equn46a1}) gives 
\begin{equation*}\label{equn46b5}\relax
(I-uu^\ast )\big((A-\theta I)^\ast (A-\theta I)+E\big )(I-uu^\ast )t = 0
\end{equation*}
If $E$ is of the form $kuu^\ast$, then $(I-uu^\ast )(A-\theta I)^\ast (A-\theta I)(I-uu^\ast )t = 0$. This implies $$\|(A-\theta I)(I-uu^\ast )t \| = 0$$ Therefore, the choice $w=u$ is not possible, unless $(\theta,(I-uu^\ast )t)$ is an eigen pair of $A$ or $(I-uu^\ast)t = 0$. From Equation~(\ref{equn46}), it is clear that if $(I-uu^\ast ) t = 0,$ then $u$ is a right singular vector of $(A-\theta I)$.
\end{proof}
\begin{obs}\label{obs2}\relax
If $E=kuw^\ast$ satisfies Equation~(\ref{equn46a2}), then the component of $w$ orthogonal to $u$ is parallel to the vector $\big((A-\theta I)^\ast(A-\theta I) - \|(A-\theta I)u\|^2 I\big)u$.
\end{obs}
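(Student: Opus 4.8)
The plan is to mirror the argument used for Observation~\ref{obs1}, tracking carefully the one place where the non-Hermitian structure of $E=kuw^\ast$ changes the outcome. Write $M:=(A-\theta I)^\ast(A-\theta I)$ and $s:=u+(I-uu^\ast)t$, so that $u^\ast s=1$. As in the proof of Observation~\ref{obs1}, I start from the perturbed equation~(\ref{equn46a1}), rewritten as $(M+E)s=k_1u$ with $k_1=u^\ast(M+E)s$, and impose that $\|(A-\theta I)s\|^2=s^\ast M s$ be minimal over $t$. Taking the inner product of $(M+E)s=k_1u$ with $s$ reproduces the identity~(\ref{equn46a3}), namely $\|(A-\theta I)s\|^2=k_1-s^\ast E s$, whose right-hand side I differentiate with respect to $t$ and set to zero, exactly as in~(\ref{equn46a4}).

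The decisive point is that $E=kuw^\ast$ is not Hermitian. Differentiating the linear term $k_1=u^\ast(M+E)s$ produces the adjoint $(M+E)^\ast u$, and here $E^\ast u=\bar k\,w$ rather than $Eu=k(w^\ast u)u$; this is the only channel through which the orthogonal part of $w$ enters, since $Eu$ is parallel to $u$ and is annihilated by $(I-uu^\ast)$. Following the template of Observation~\ref{obs1}, the contribution coming from $s^\ast E s$ is removed by~(\ref{equn46a2}) (it reduces to a multiple of $(I-uu^\ast)Es=0$), so the stationarity relation collapses to
\begin{equation*}
(I-uu^\ast)(M+E)^\ast u=0,\qquad\text{i.e.}\qquad (I-uu^\ast)\big(Mu+\bar k\,w\big)=0 .
\end{equation*}
Solving for the orthogonal part of $w$ gives $\bar k\,(I-uu^\ast)w=-(I-uu^\ast)Mu$, so $(I-uu^\ast)w$ is parallel to $(I-uu^\ast)Mu$.

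It then remains to identify $(I-uu^\ast)Mu$ with the vector in the statement. Since $u^\ast u=1$,
\begin{equation*}
(I-uu^\ast)Mu=Mu-(u^\ast Mu)u=\big(M-\|(A-\theta I)u\|^2 I\big)u ,
\end{equation*}
where I have used $u^\ast Mu=u^\ast(A-\theta I)^\ast(A-\theta I)u=\|(A-\theta I)u\|^2$. Substituting back $M=(A-\theta I)^\ast(A-\theta I)$ yields precisely $\big((A-\theta I)^\ast(A-\theta I)-\|(A-\theta I)u\|^2 I\big)u$, which is the asserted direction.

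The step I expect to be the main obstacle is the adjoint bookkeeping in the second paragraph. Because $Eu=k(w^\ast u)u$ is parallel to $u$, any relation written in the form $(I-uu^\ast)(M+E)u=0$ — the form that appears directly in Observation~\ref{obs1} for Hermitian $E$ — loses all dependence on $w$ and degenerates to $(I-uu^\ast)Mu=0$. The orthogonal component of $w$ survives only if the term $u^\ast(M+E)s$ is differentiated correctly, so that the adjoint $E^\ast u=\bar k\,w$ appears, and if~(\ref{equn46a2}) is applied to eliminate the $Es$-piece alone. Keeping this distinction straight, rather than any routine algebra, is where the care is needed; the remaining simplifications are immediate.
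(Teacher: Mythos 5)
Your frame is the paper's --- differentiate the right-hand side of (\ref{equn46a3}), invoke (\ref{equn46a2}), and identify $(I-uu^\ast)(A-\theta I)^\ast(A-\theta I)u$ with the stated vector --- and your closing algebra agrees with the paper verbatim. The gap is in the step you yourself call decisive. Write $P=I-uu^\ast$, $M=(A-\theta I)^\ast(A-\theta I)$, $s=u+Pt$. You claim the contribution of $s^\ast Es$ to the stationarity relation ``reduces to a multiple of $PEs=0$.'' That is false for non-Hermitian $E$: the gradient of $t\mapsto s^\ast Es$ is $P(E+E^\ast)s$ (this is exactly what the paper's proof writes), and (\ref{equn46a2}) annihilates only the $PEs$ half; the surviving half is $PE^\ast s=\bar k\,(u^\ast s)\,Pw=\bar k\,Pw$, which is nonzero precisely when the observation has content. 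In the paper's proof that surviving term is the sole carrier of the $w$-dependence: its stationarity relation is $-PE^\ast s+P(M+E)u=0$, where the linear term is written as $P(M+E)u$ (no adjoint), so the $E$-part vanishes under the projection ($PEu=0$ since $Eu\parallel u$) and one gets $kPw=PMu$. You instead manufacture the $w$-dependence from the adjoint in the linear term ($E^\ast u=\bar k\,w$) and get $\bar k\,Pw=-PMu$; the sign is absorbable into the free scalar $k$, so your conclusion coincides with the statement, but the derivation is not sound.

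The underlying inconsistency: your own (correct) point is that $E=kuw^\ast$ is non-Hermitian and adjoints must be tracked --- but then they must be tracked in both terms. Doing so, the quadratic term contributes $-\bar k\,Pw$ and the linear term $+\bar k\,Pw$; these cancel identically and the stationarity relation degenerates to $PMu=0$, i.e.\ $u$ is a right singular vector of $A-\theta I$, with no constraint on $w$ whatsoever. So your argument yields the advertised direction only because the adjoint is kept in the linear term and silently dropped in the quadratic one --- the mirror image of the paper's bookkeeping, which keeps $(E+E^\ast)s$ in the quadratic term but writes the linear term without the adjoint. To reproduce the paper's actual proof, retain $PE^\ast s$ from the quadratic form (that term, not the linear one, is what determines the direction of $w$) and leave the linear term as $P(M+E)u$; your claim that (\ref{equn46a2}) removes the whole $s^\ast Es$ contribution is the step that fails.
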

\begin{proof}
Vanishing of the gradient of right hand side of Equation~(\ref{equn46a3}) gives
\begin{equation*}\label{equn46a5}\relax
-(I-uu^\ast)(E+E^\ast )\big(u+(I-uu^\ast)t\big)+(I-uu^\ast)\big((A-\theta I)^\ast (A-\theta I)+E\big)u = 0
\end{equation*}
Using Equation~(\ref{equn46a2}), we have
 $$-(I-uu^\ast)E^\ast \big(u+(I-uu^\ast)t\big)+(I-uu^\ast)\big((A-\theta I)^\ast (A-\theta I)+E\big)u = 0$$
With $E=kuw^\ast$ and $\|u\| = 1$, we have
\begin{equation*}\label{equn46a6}\relax
-k(I-uu^\ast)w+(I-uu^\ast)(A-\theta I)^\ast (A-\theta I)u = 0
\end{equation*}
It gives
$$-k(I-uu^\ast)w+(A-\theta I)^\ast(A-\theta I)u = \|(A-\theta I)u\|^2 u$$ Therefore, $k(I-uu^\ast )w = \big((A-\theta I)^\ast(A-\theta I) - \|(A-\theta I)u\|^2 I\big)u$. 
\end{proof}

Observation-\ref{obs2} gives the direction of the component of $w$ orthogonal to the vector $u.$ Moreover, it is clear that the component of $w$ in the direction of $u$ can be chosen as $\alpha u$, for some scalar $\alpha$. Therefore, the vector $w$ in Observation-\ref{obs2} is of the form $w=\alpha u+\big((A-\theta I)^\ast(A-\theta I) - \|(A-\theta I)u\|^2 I\big)u.$ A scalar $\alpha$ can be chosen by imposing an extra condition on the vector $w$ such as $\|w\|_2=1.$ Further, notice that when $E=kuw^\ast,$ solutions of Equations (\ref{equn46}) and (\ref{equn46a1}) coincide. Then Equation~(\ref{equn46}) can be rewritten as
\begin{equation}\label{equn46ab2}\relax
\big(~(A-\theta I)^\ast (A-\theta I)\big)(u+(I-uu^\ast)t) = k_1u
\end{equation} 
with $k_1 = u^\ast (A-\theta I)^\ast(A-\theta I)(u+(I-uu^\ast)t)$.
If $A$ is symmetric, then from Equation~(\ref{equn46ab2}) we have  
$u+(I-uu^\ast)t =k_1(A-\theta I)^{-2}u$. In this case, the new
eigenvector approximation is  
\begin{equation}\label{newapprox}\relax
\frac{(A-\theta
I)^{-2}u}{\|(A-\theta I)^{-2}u\|}
\end{equation}
Next, we deal with the convergence of the norms of residual vectors in MSJD method.
\begin{theorem}\label{thm1c}
Let $u_k$ denote an approximation to an eigenvector $x$ corresponding to the eigenvalue $\lambda$ of $A$ at the $k$th iteration of the MSJD method. Let  $\rho_k:= \rho(u_k)$ be the  Rayleigh quotient of $u_k$ with respect to the matrix $A$. Then the sequence of residual norms $\{\|(A-\rho_k I)u_k\|^2\}$  and  the sequence $\{\|(A-\rho_k I)u_{k+1}\|^2\}$ converge to the same limit. Further the sequence of absolute differences $|\rho_{k+1}-\rho_k|$ between Rayleigh quotients in two consecutive iterations of MSJD method converges to $0$ as $k \rightarrow \infty.$ 
\end{theorem}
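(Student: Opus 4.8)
The plan is to track two residual quantities along the iteration and sandwich them between consecutive values of the true residual. Write $B_k := A-\rho_k I$ (symmetric, since $A$ is symmetric), and set $R_k := \|(A-\rho_k I)u_k\|^2$ and $S_k := \|(A-\rho_k I)u_{k+1}\|^2$. My goal is to prove the two-sided bound $R_{k+1} \le S_k \le R_k$. Granting this, $\{R_k\}$ is nonincreasing and bounded below by $0$, hence convergent to some limit $L$, and the squeeze $R_{k+1} \le S_k \le R_k$ forces $S_k \to L$ as well, which is the first assertion of the theorem.

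First I would record the decisive identity coming from the Rayleigh quotient. For a unit vector $v$ and a scalar $\theta$, expansion gives
\[
\|(A-\theta I)v\|^2 = \big(v^\ast A^2 v - \rho(v)^2\big) + (\theta-\rho(v))^2,
\]
so $\theta \mapsto \|(A-\theta I)v\|^2$ is minimized at $\theta = \rho(v)$. Applying this with $v=u_{k+1}$, $\rho(v)=\rho_{k+1}$, and $\theta=\rho_k$ yields the exact relation
\[
S_k = \|(A-\rho_{k+1}I)u_{k+1}\|^2 + (\rho_k-\rho_{k+1})^2 = R_{k+1} + (\rho_{k+1}-\rho_k)^2 .
\]
In particular $R_{k+1}\le S_k$, which is one half of the sandwich; the quadratic remainder is exactly what will later deliver the convergence of the Rayleigh quotients for free.

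The heart of the argument is the other inequality $S_k \le R_k$, i.e. that replacing $u_k$ by the new approximation does not increase the residual measured at the old shift $\rho_k$. By Equation~(\ref{newapprox}), in the symmetric case $u_{k+1}$ is the normalization of $w := B_k^{-2}u_k/(u_k^\ast B_k^{-2}u_k)$, and this $w$ is precisely the minimizer of $\|B_k z\|^2$ over the affine hyperplane $\{z : u_k^\ast z = 1\}$, since Step~1 minimizes over vectors $z=u_k+(I-u_ku_k^\ast)t$, all of which satisfy $u_k^\ast z = 1$. As $u_k$ itself lies in this hyperplane, minimality gives $\|B_k w\|^2 \le \|B_k u_k\|^2 = R_k$. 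It then remains to control the normalization: the Cauchy--Schwarz estimate $(u_k^\ast B_k^{-2}u_k)^2 \le u_k^\ast B_k^{-4}u_k$ (using $\|u_k\|=1$) shows $\|w\|^2 = (u_k^\ast B_k^{-4}u_k)/(u_k^\ast B_k^{-2}u_k)^2 \ge 1$, whence $S_k = \|B_k w\|^2/\|w\|^2 \le \|B_k w\|^2 \le R_k$. Equivalently, diagonalizing $A$ and writing the residual as a weighted variance of the eigenvalues reduces $S_k \le R_k$ to the moment inequality $\mathbb{E}[1/t] \le \mathbb{E}[t]\,\mathbb{E}[1/t^2]$ for a positive weight distribution, which is Chebyshev's sum inequality applied to the oppositely monotone functions $t$ and $1/t^2$.

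With both inequalities established, the sandwich $R_{k+1}\le S_k\le R_k$ gives convergence of $\{R_k\}$ and $\{S_k\}$ to a common limit $L$. For the final assertion I would return to the exact identity of the second step: $(\rho_{k+1}-\rho_k)^2 = S_k - R_{k+1}$, and since both $S_k$ and $R_{k+1}$ tend to $L$, the right-hand side tends to $0$, so $|\rho_{k+1}-\rho_k|\to 0$. The step I expect to be the main obstacle is proving $S_k \le R_k$ cleanly, in particular establishing $\|w\|\ge 1$ so that the normalization cannot undo the gain from the constrained minimization; one must also be careful to treat the degenerate case in which $\rho_k$ coincides with an eigenvalue of $A$ (so that $B_k^{-2}$ is undefined), which corresponds to having already reached a fixed point of the iteration.
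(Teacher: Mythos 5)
Your proposal is correct and is essentially the paper's own argument: both prove the sandwich $\|(A-\rho_{k+1}I)u_{k+1}\|^2 \le \|(A-\rho_k I)u_{k+1}\|^2 \le \|(A-\rho_k I)u_k\|^2$ from the Step~1 minimization together with the normalization factor being at least $1$ (the paper's Equations~(\ref{equn46b})--(\ref{equn46b1})), then conclude by monotonicity and squeezing, reading off $|\rho_{k+1}-\rho_k|\to 0$ from the exact identity (\ref{equn46b2}), which is your $S_k = R_{k+1}+(\rho_{k+1}-\rho_k)^2$. The only (avoidable) complication on your side is the proof that $\|w\|\ge 1$: you go through the inverse formula $w=B_k^{-2}u_k/(u_k^\ast B_k^{-2}u_k)$ and Cauchy--Schwarz, which requires $B_k$ nonsingular and forces your degenerate-case caveat, whereas the paper simply uses $w=u_k+(I-u_ku_k^\ast)t_k$ with the correction orthogonal to $u_k$, so that $\|w\|^2=1+\|(I-u_ku_k^\ast)t_k\|^2\ge 1$ by Pythagoras, needing no invertibility at all.
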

\begin{proof}
From Step~1 of MSJD method, it follows that
\begin{equation}\label{equn46b}\relax
\|(A-\rho_k I)(u_k+(I-u_ku_k^\ast )t_k)\|^2 \leq \|(A-\rho_k
I)u_k\|^2
\end{equation}
And from Step 2, we have 
\begin{equation}\label{equn46ab1}\relax
u_{k+1} = \frac{u_k+(I-u_ku_k^\ast
)t_k}{(1+\|(I-u_ku_k^\ast )t_k\|^2)^\frac{1}{2}}
\end{equation}
Notice that $\|u_k\| = 1$ and  $\|u_k+(I-u_ku_k^\ast )t_k\|^2 = 1+\|(I-u_ku_k^\ast )t_k\|^2 \geq 1.$ Then Equations (\ref{equn46b}) and (\ref{equn46ab1}) give  
\begin{eqnarray}\label{equn46b1}\relax
\|(A-\rho_k I)u_{k+1}\|^2 = \frac{\|(A-\rho_k I)(u_k+(I-u_ku_k^\ast
)t_k)\|^2}{1+\|(I-u_ku_k^\ast )t_k\|^2} \nonumber \\ \leq \|(A-\rho_k
I)(u_k+(I-u_ku_k^\ast )t_k)\|^2
\end{eqnarray}
As $\|u_{k+1}\| = 1 $ and  $\rho_{k+1}$ is a Rayleigh quotient of the vector $u_{k+1}$ with respect to $A$, we have $\langle (A-\rho_k I)u_{k+1},(\rho_{k+1}-\rho_k)u_{k+1} \rangle = (\rho_{k+1}-\rho_k)^2.$ Thus,
\begin{eqnarray}\label{equn46b2}\relax
\|(A-\rho_{k+1} I)u_{k+1}\|^2 = \|\big((A-\rho_k I)-(\rho_{k+1}-\rho_k)I\big)u_{k+1}\|^2  \nonumber \\= \|(A-\rho_k I)u_{k+1}\|^2-|\rho_{k+1}-\rho_k|^2~~~~
\end{eqnarray}
Therefore $\|(A-\rho_{k+1} I)u_{k+1}\|^2  \leq \|(A-\rho_k I)u_{k+1}\|^2$.  Combining this inequality with Equations~(\ref{equn46b1}) and (\ref{equn46b2}), we obtain
\begin{equation*}\label{equn46c}\relax
\|(A-\rho_{k+1} I)u_{k+1}\|^2  \leq \|(A-\rho_k I)u_{k+1}\|^2 \leq
\|(A-\rho_k I)u_k\|^2
\end{equation*}
It shows that $\{\|(A-\rho_k I)u_k\|^2\}$ is a
monotonically decreasing sequence of non-negative real numbers. Suppose it converges to $\alpha$. By Sandwich theorem,  the sequence $\{\|(A-\rho_k I)u_{k+1}\|^2\}$ also converges to $\alpha$. Using Equation~(\ref{equn46b2}), we conclude that $|\rho_{k+1}-\rho_k| \rightarrow 0$ as $k \rightarrow \infty.$
\end{proof}
Though $|\rho_{k+1}-\rho_k| \rightarrow 0$ is enough for computational purposes, it does not imply that $\{\rho_k\}$ is a convergent sequence. In order to prove a result on convergence alternatives, we use the following two lemmas.
\begin{lemma}\label{thm1b}\relax
Let $u$  be unit vector. Let $\alpha $ be the Rayleigh quotient of $u$ with respect to matrix $A$. Let $s := u+ \tau (I-uu^\ast )t$, where $\tau$ is chosen such that Rayleigh quotient of $s$ is minimum over the space of vectors spanned by $u$ and $(I-uu^\ast ) t.$ Write $J_{u,s}: = (I-uu^\ast ) (A-\rho(s)I) (I-uu^\ast .)$ Then the following relationship holds: 
\begin{equation}\label{equn18}\relax
\langle J_{u,s}(u-s),u-s \rangle = \rho(u) - \rho(s)
\end{equation}
\end{lemma}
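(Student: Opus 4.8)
The plan is to reduce both sides of Equation~(\ref{equn18}) to the same scalar, by exploiting two structural facts: $(I-uu^\ast)$ is an orthogonal projection that fixes the vector $p:=(I-uu^\ast)t$, and the minimizing choice of $\tau$ makes $s$ a Ritz vector of $A$ on the two-dimensional space $\mathrm{span}\{u,p\}$. Throughout I use that $A$ is symmetric, as in the setting of the paper.

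First I would rewrite everything in terms of $p$. Since $u^\ast p=0$, we have $s=u+\tau p$ and $u-s=-\tau p$, while $(I-uu^\ast)p=p$. Because the projection is self-adjoint and fixes $p$, the left-hand side collapses to
\[
\langle J_{u,s}(u-s),u-s\rangle=\tau^2\langle (A-\rho(s)I)p,p\rangle .
\]
Next I would use the minimality of $\rho(s)$ over $\mathrm{span}\{u,p\}$. By the standard Rayleigh--Ritz characterization, the minimizer $s$ has its residual orthogonal to the whole subspace, i.e. $(A-\rho(s)I)s\perp u$ and $(A-\rho(s)I)s\perp p$. Expanding $s=u+\tau p$ and using $u^\ast p=0$ together with $\langle Ap,u\rangle=\langle Au,p\rangle$ (here symmetry of $A$ enters), the orthogonality to $p$ gives $\langle Au,p\rangle+\tau\langle (A-\rho(s)I)p,p\rangle=0$, and the orthogonality to $u$ gives $(\rho(u)-\rho(s))+\tau\langle Au,p\rangle=0$. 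Multiplying the first relation by $\tau$ identifies the displayed left-hand side as $-\tau\langle Au,p\rangle$, while the second relation says $\rho(u)-\rho(s)=-\tau\langle Au,p\rangle$; the two agree, which is exactly Equation~(\ref{equn18}).

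The main obstacle is justifying the full residual-orthogonality, since a priori $\tau$ is only optimized along the one-parameter family $u+\tau p$. The point to argue carefully is that, because the Rayleigh quotient is scale-invariant, minimizing it along this family is the same as minimizing over all directions of the two-dimensional subspace $\mathrm{span}\{u,p\}$ (the excluded pure $p$-direction being a degenerate case), so $s$ is genuinely a Ritz vector and both orthogonality relations hold. A more computational alternative is to write $\rho(u+\tau p)$ as an explicit rational function of $\tau$, set its derivative to zero, and check directly that both sides of Equation~(\ref{equn18}) reduce to $-\tau\langle Au,p\rangle$; the symmetry of $A$ is precisely what makes the relevant cross terms coincide.
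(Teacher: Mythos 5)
Your proof is correct, but note that the paper does not actually contain its own proof of this lemma: it simply defers to Theorem~2.6 of \cite{ovtline}. Your argument is therefore a self-contained, elementary replacement for that citation, and it holds up under scrutiny: writing $p=(I-uu^\ast)t$, the left-hand side collapses to $\tau^2\langle (A-\rho(s)I)p,p\rangle$ because the projection is self-adjoint and fixes $p$; the Rayleigh--Ritz orthogonality $(A-\rho(s)I)s\perp\mathrm{span}\{u,p\}$ then yields the two scalar relations $\langle Au,p\rangle+\tau\langle(A-\rho(s)I)p,p\rangle=0$ and $(\rho(u)-\rho(s))+\tau\langle Au,p\rangle=0$, which identify both sides of Equation~(\ref{equn18}) with $-\tau\langle Au,p\rangle$ (symmetry of $A$ enters exactly where you say, in equating $\langle Ap,u\rangle$ with $\langle Au,p\rangle$). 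One remark: the ``main obstacle'' you raise---upgrading optimality along the one-parameter family $u+\tau p$ to residual orthogonality against the full two-dimensional subspace---is actually dispensed with by the lemma's own hypothesis, which stipulates that $\rho(s)$ is minimal over the space spanned by $u$ and $(I-uu^\ast)t$; your scale-invariance observation is the correct way to reconcile the parametrization with that hypothesis (the pure $p$ direction being the only excluded ray), but you could simply take subspace minimality as given. Note also that your argument multiplies by $\tau$ and never divides, so the trivial case $\tau=0$ (where $s=u$ and both sides vanish) is handled automatically. What the citation buys the paper is brevity; what your argument buys is a verifiable proof within the paper's own framework, which is preferable given that the cited result is stated in a somewhat different setting.
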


For the proof of Lemma~\ref{thm1b}, see Theorem 2.6 in \cite{ovtline}. 

\begin{lemma}\label{thm3}\relax
Let $\rho_k$, $u_k$ and $t_k$ be as in Theorem~(\ref{thm1v}). Let $\tau$ be such that for the vector $s := u_k+ \tau (I-u_ku_k^\ast )t_k$  norm of the residual $\frac{\|(A-\rho_k I)s\|}{\|s\|}$ is minimum over the subspace spanned by $u_k$ and $(I-u_ku_k^\ast )t_k$. Then 
\begin{equation}\label{equn32}\relax
\frac{\|(A-\rho_k I)(I-u_ku_k^\ast )t_k\|^2}{\|(I-u_ku_k^\ast )t_k\|^2} \geq
\|(A-\theta I)u_k\|^2
\end{equation}
and
\begin{equation}\label{equn32a}\relax
\|(I-u_ku_k^\ast)t_k\|^2 \leq 1
\end{equation}
\end{lemma}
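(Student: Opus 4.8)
The plan is to reduce both claims to a two–dimensional computation on $\mathrm{span}(u_k,q)$, where I abbreviate $q:=(I-u_ku_k^{\ast})t_k$, $B:=A-\rho_k I$ (symmetric, so $\theta=\rho_k$), and set $a:=\|Bu_k\|^2$, $c:=\|Bq\|^2$, $d:=\|q\|^2$; note $q\perp u_k$ and $\|u_k\|=1$. First I would observe that (\ref{equn32a}) is an easy consequence of (\ref{equn32}): if the bound $c/d\ge a$ holds and one also knows the auxiliary inequality $c\le a$, then (assuming $u_k$ is not already an eigenvector, so $a>0$) one gets $ad\le c\le a$ and hence $d\le 1$. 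So it suffices to prove the auxiliary bound $c\le a$ together with (\ref{equn32}), i.e. $c\ge ad$.

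For the auxiliary bound I would exploit the least–squares optimality of $t_k$. Since $q$ minimises $\|B(u_k+q)\|^2$ over $q\perp u_k$, the normal equation (\ref{equn43})/(\ref{equn46}) gives $B^2(u_k+q)\in\mathrm{span}(u_k)$, so that $B(u_k+q)\perp Bq'$ for every $q'\perp u_k$. Taking $q'=q$ yields the cross–term identity $\langle Bu_k,Bq\rangle=-c$. Substituting this into $\|B(u_k+q)\|^2=a+2\langle Bu_k,Bq\rangle+c=a-c\ge 0$ gives $c\le a$, and as a by–product identifies the minimal (unnormalised) residual on the line $\tau\mapsto u_k+\tau q$ as occurring at $\tau=1$ with value $a-c$.

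Next I would bring in the optimal scaling that defines $s$. Using $q\perp u_k$, $\|u_k\|=1$ and the cross–term identity, the normalised residual over the subspace is
\[
g(\tau)=\frac{\|B(u_k+\tau q)\|^2}{\|u_k+\tau q\|^2}=\frac{a-2c\tau+c\tau^2}{1+d\tau^2},
\]
and the $\tau$ in the hypothesis is the minimiser, $g'(\tau)=0$. Setting the numerator of $g'$ to zero and simplifying at the critical point (where $N'D=ND'$ forces $g=N'/D'$) gives the compact value $g(\tau)=\dfrac{c(\tau-1)}{d\tau}$; since $g(\tau)\ge 0$ and $c,d,\tau>0$, this already forces $\tau\ge 1$. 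On the other hand $g(0)=a$ and $g(\tau)\to c/d$ as $\tau\to\infty$ are both attained values of the Rayleigh quotient of $B^2$ on $\mathrm{span}(u_k,q)$, so the minimum satisfies $g(\tau)\le a$ and $g(\tau)\le c/d$.

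The remaining and genuinely delicate step is (\ref{equn32}) itself, the lower bound $c/d\ge a$ — equivalently that the correction direction $\widehat q:=q/\|q\|$ has residual no smaller than that of $u_k$. I expect this to be the main obstacle, because it does not follow from the two–dimensional data alone: the Gram matrix $\left(\begin{smallmatrix} a & -c/\sqrt d\\ -c/\sqrt d & c/d\end{smallmatrix}\right)$ is only known to be positive semidefinite with determinant $(c/d)(a-c)\ge 0$, which is consistent with either sign of $c/d-a$. I would close it using the standing hypothesis of Theorem~\ref{thm1v} that $u_k$ is already close to the eigenvector $x$ (so $\rho_k\approx\lambda$ and the weight of $u_k$ on $x$ dominates). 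Concretely I would expand $a,c,d$ in the eigenbasis of $A$, writing $\beta_i:=\lambda_i-\rho_k$ and $w_i:=|\langle u_k,x_i\rangle|^2$, exploit the Rayleigh–quotient constraint $\sum_i w_i\beta_i=0$, and invoke the Cauchy–Schwarz inequalities $\big(\sum_i w_i\beta_i^2\big)\big(\sum_i w_i\beta_i^{-2}\big)\ge 1$ and $\sum_i w_i\beta_i^{-4}\ge\big(\sum_i w_i\beta_i^{-2}\big)^2$. In the near–convergence regime the weight on $x$ tends to $1$, the optimal $\tau$ tends to $1$, and these estimates tighten to give $c/d\ge a$; this delivers (\ref{equn32}), and (\ref{equn32a}) then follows from the reduction in the first paragraph.
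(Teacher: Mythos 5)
Your handling of everything except (\ref{equn32}) coincides with the paper's own proof: the paper likewise gets the cross-term identity $\langle (A-\rho_kI)u_k,(A-\rho_kI)q\rangle=-\|(A-\rho_kI)q\|^2$ (with $q=(I-u_ku_k^\ast)t_k$) from the normal equation~(\ref{equn43}), deduces $c\le a$ in your notation, and obtains (\ref{equn32a}) exactly as you do, from $ad\le c\le a$. The genuine gap is (\ref{equn32}) itself: your last paragraph is a plan, not a proof, and it closes the argument by invoking a ``standing hypothesis of Theorem~\ref{thm1v} that $u_k$ is already close to the eigenvector $x$''. No such hypothesis exists: Theorems~\ref{thm1c} and \ref{thm1v} assume only that $u_k$ is an MSJD iterate (in particular $u_1$ can be an arbitrary unit vector), and Lemma~\ref{thm3} is consumed inside the proof of Theorem~\ref{thm1v}, for every $k$, precisely in order to establish convergence; importing a near-convergence assumption here would make that argument circular. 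So you have proved the easy half and left the substantive half unproven.

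That said, your diagnosis of where the difficulty sits is exactly right, and the comparison with the paper is instructive because the paper's own step founders at the same spot. The paper proves (\ref{equn32}) by applying Lemma~\ref{thm1b} (Ovtchinnikov's identity) to $(A-\rho_kI)^\ast(A-\rho_kI)$, which yields (\ref{equn31}), i.e.\ $g(\tau)=(a-\tau^2c)/(1-\tau^2d)$ at the minimizing $\tau$, and then concludes $c\ge ad$ from $g(\tau)\le a$. That inference requires $1-\tau^2d>0$, which is never verified; and since, as you showed, the minimizer satisfies $\tau\ge1$, positivity of $1-\tau^2d$ would essentially presuppose $d<1$, i.e.\ the other conclusion (\ref{equn32a}). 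In fact your suspicion that the claim ``does not follow from the two-dimensional data alone'' is an understatement: the lemma fails as stated. Take $A=\mathrm{diag}(1,-1,\tfrac1{10})$ and let $u_k$ have squared entries $(\tfrac{29}{80},\tfrac{31}{80},\tfrac14)$, so that $\rho_k=0$. Solving (\ref{equn43}) exactly gives $u_k+q=\kappa A^{-2}u_k$ with $\kappa=1/(u_k^\ast A^{-2}u_k)=\tfrac{4}{103}$, and one computes $a=\tfrac{301}{400}$, $c/d=\tfrac{103}{400}<a$, and $d\approx 2.77>1$, while the minimizing $\tau\approx1.04$ exists, so all hypotheses hold and both (\ref{equn32}) and (\ref{equn32a}) are violated. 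The correct conclusion is that the lemma needs an additional hypothesis (for instance $\tau^2\|q\|^2<1$, or the near-convergence regime you tried to invoke); neither your sketch nor the paper's argument establishes it in the stated generality, but your proposal, unlike the paper, at least flags the obstruction honestly.
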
 
\begin{proof}
 For the vectors $u_k$ and $s,$ we have 
 $$J_{u_k,s} = (I-u_ku_k^\ast )\big( (A-\rho_k I)^\ast (A-\rho_k I)-\frac{\|(A-\rho_k I)s\|^2}{\| s \|^2} I \big ) (I-u_ku_k^\ast )$$ 
 and $u_k-s = -\tau (I-u_ku_k^\ast )t_k$. Using Equation~(\ref{equn18}) for the matrix \linebreak
 $ (A-\rho_k I)^\ast (A-\rho_k I)$ and the vectors $u_k$ and $s,$ we obtain 
\begin{eqnarray*}\label{equn30}\relax
\tau^2 \big(\|(A-\rho_k I)(I-u_ku_k^\ast )t_k\|^2-\frac{\|(A-\rho_k
I)s\|^2}{\| s \|^2} \cdot \|(I-u_ku_k^\ast )t_k\|^2\big) \\ =
\|(A-\rho_k I)u_k\|^2-\frac{\|(A-\rho_k I)s\|^2}{\| s \|^2}
\end{eqnarray*}
It follows that
\begin{equation}\label{equn31}\relax
\frac{\|(A-\rho_k I)s\|^2}{\| s \|^2} = \frac{\|(A-\rho_k
I)u_k\|^2-\tau^2 \|(A-\rho_k I)(I-u_ku_k^\ast )t_k\|^2}{1-\tau^2
\|(I-u_ku_k^\ast )t_k\|^2}
\end{equation}
Since $\frac{\|(A-\rho_k I)s\|^2}{\| s \|^2} \leq \|(A-\rho_k
I)u_k\|^2$,  Equation~(\ref{equn31}) yields the inequality in Equation~(\ref{equn32}). From Equation~(\ref{equn46b}), we have 
$$\|(A-\rho_k I)u_k\|^2 \geq \|(A-\rho_k I)(I-u_ku_k^\ast )t_k\|^2.$$ 
It gives $\|(I-u_ku_k^\ast )t_k\|^2 \leq 1$. 
\end{proof}
\begin{theorem}\label{thm1v}\relax
Let the scalar $\rho_k$ and the vector  $u_k$ be as in Theorem~\ref{thm1c}.  Then the sequence of eigenvector approximations in MSJD method converges to either an eigenvector of the matrix $A$ corresponding to the eigenvalue $\rho,$  or to a vector in an invariant subspace corresponding to eigenvalue $\rho,$ or to a vector in an invariant subspace spanned by eigenvectors corresponding to eigenvalues $\rho \pm \alpha.$
\end{theorem}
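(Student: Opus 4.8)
The plan is to use the explicit symmetric form of the update. By Equation~(\ref{newapprox}) each MSJD step reads
\begin{equation*}
u_{k+1}=\frac{(A-\rho_k I)^{-2}u_k}{\|(A-\rho_k I)^{-2}u_k\|},
\end{equation*}
so after fixing an orthonormal eigenbasis $Ax_i=\lambda_i x_i$ and writing $u_k=\sum_i c_i^{(k)}x_i$ (with $\rho_k=\sum_i|c_i^{(k)}|^2\lambda_i$), the iteration is a shifted power iteration that amplifies the components $x_i$ whose $\lambda_i$ are nearest $\rho_k$. Writing $p_k:=(I-u_ku_k^\ast)t_k$, the quantity I would really track is the residual of the normal equation~(\ref{equn43}), namely $g(u):=(I-uu^\ast)(A-\rho(u)I)^2u$: it is a \emph{continuous} function of $u$ and, crucially, involves $(A-\rho I)^2$ rather than its inverse, so the reduced Hessian $(I-uu^\ast)(A-\rho I)^2(I-uu^\ast)$ of the least-squares functional in Step~1 is uniformly bounded (its norm is at most $(\lambda_{\max}-\lambda_{\min})^2$, since every $\rho_k$ lies in $[\lambda_{\min},\lambda_{\max}]$). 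Lemma~\ref{thm3} supplies the companion bound $\|p_k\|\le 1$, which keeps the normalization in Step~2 harmless.

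First I would identify the subsequential limits as fixed points. By Theorem~\ref{thm1c} both $\|(A-\rho_k I)u_k\|^2$ and $\|(A-\rho_k I)u_{k+1}\|^2$ converge to the same limit $\alpha$, and Equation~(\ref{equn46b1}) sandwiches the least-squares value $\|(A-\rho_k I)(u_k+p_k)\|^2$ between them; hence the per-step improvement $\|(A-\rho_k I)u_k\|^2-\|(A-\rho_k I)(u_k+p_k)\|^2$ vanishes. Since this improvement is bounded below by $\|g(u_k)\|^2/(\lambda_{\max}-\lambda_{\min})^2$ (the Hessian being uniformly bounded), I obtain $\|g(u_k)\|\to 0$. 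Now extract, by compactness of the unit sphere, a subsequence $u_{k_j}\to u^\ast$, whence $\rho_{k_j}\to\rho:=(u^\ast)^\ast A u^\ast$; continuity of $g$ gives $g(u^\ast)=0$, i.e. $(I-u^\ast(u^\ast)^\ast)(A-\rho I)^2u^\ast=0$, which is exactly Equation~(\ref{equn46ab2}) with vanishing correction: $(A-\rho I)^2u^\ast=k_1u^\ast$. Since $A$ is symmetric, $u^\ast$ therefore lies in the span of the eigenvectors $x_i$ with $(\lambda_i-\rho)^2=\beta^2$ for a single $\beta\ge 0$.

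Next I would classify these limits by Rayleigh-quotient consistency. If $P$ and $Q$ denote the total weight of $u^\ast$ on the eigenspaces of $\rho-\beta$ and $\rho+\beta$, then $\rho=(u^\ast)^\ast A u^\ast=\rho+(Q-P)\beta$, so either $\beta=0$ or $P=Q=\tfrac12$. In the first case $(A-\rho I)u^\ast=0$, so $u^\ast$ is an eigenvector of $A$ for the eigenvalue $\rho$ when that eigenvalue is simple and a vector of the corresponding invariant subspace otherwise; in the second case $u^\ast$ lies, with equal weight, in the invariant subspace spanned by the eigenvectors for $\rho\pm\beta$. These are precisely the three stated alternatives, and in either case the limit value $\rho$ is forced to be an eigenvalue of $A$ or the midpoint of a pair of eigenvalues.

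Finally I would upgrade this to genuine convergence. Because $\{\rho_k\}$ is bounded and $|\rho_{k+1}-\rho_k|\to 0$ (Theorem~\ref{thm1c}), its set of limit points is a closed connected interval; but the previous step confines that set to the \emph{finite} collection of eigenvalues and pairwise midpoints, so it collapses to a single point and $\rho_k\to\rho$. With the shift frozen, the recurrence of the first paragraph becomes an asymptotic power iteration for the fixed matrix $(A-\rho I)^{-2}$, whose dominant eigenspace is the span of the $x_i$ nearest $\rho$; power-iteration convergence then drives $u_k$ into that eigenspace and yields one of the three alternatives. I expect the genuinely hard part to be the midpoint alternative: it is an \emph{unstable} fixed point, because a small excess of weight on $\rho-\beta$ lowers $\rho_k$, which brings $\rho-\beta$ nearer and amplifies that side still more --- a positive feedback that tends to drive the iteration off the midpoint and into the eigenvector case. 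Showing that the weights can nonetheless lock at exactly $\tfrac12$--$\tfrac12$, rather than drift to one side, requires a careful quantitative analysis of the coupled recurrence for $\big(|c_i^{(k)}|^2,\rho_k\big)$ near a midpoint, and this is the delicate heart of the argument.
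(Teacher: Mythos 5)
Your route is genuinely different from the paper's and, where it is complete, it is sound. The paper works with exact identities: taking the inner product of the normal equation~(\ref{equn43}) with $t_k$ gives $\|(A-\rho_k I)(u_k+p_k)\|^2=\|(A-\rho_k I)u_k\|^2-\|(A-\rho_k I)p_k\|^2$, Equation~(\ref{equn46g}) then expresses the per-step decrease exactly, Lemma~\ref{thm3} bounds its numerator below by $2\|(A-\rho_k I)u_k\|^2\|p_k\|^2$, and the proof splits into the dichotomy $\|p_k\|\to 0$ or $\|(A-\rho_k I)u_k\|\to 0$, finishing each case with the fixed-point relation~(\ref{equn46h}). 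You instead bound the least-squares improvement below by $\|g(u_k)\|^2/\|H_k\|$ (valid: the normal equations are consistent, so $g(u_k)$ lies in the range of the reduced Hessian, whose norm is uniformly bounded because $\rho_k\in[\lambda_{\min},\lambda_{\max}]$), conclude $g(u_k)\to 0$ from Theorem~\ref{thm1c} and the sandwich~(\ref{equn46b1}), and then use compactness of the unit sphere to characterize \emph{every} subsequential limit as a solution of $(A-\rho I)^2u^\ast=k_1u^\ast$ --- the same fixed-point relation the paper reaches, but obtained for all limit points at once rather than along the monotone dichotomy. Two of your ingredients go beyond the paper and are worth keeping: the Rayleigh-quotient consistency argument (either $\beta=0$ or the weights on the $\rho\pm\beta$ eigenspaces are exactly $\tfrac12$ each), and the proof that $\rho_k$ actually converges (the limit set of a bounded sequence with vanishing increments is connected, hence a single point of the finite set of eigenvalues and midpoints). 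The paper never establishes convergence of $\rho_k$; it implicitly assumes a limit $\rho$ when stating its cases.

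The step you flag as open --- that $u_k$ itself converges to a single vector, in particular that the weights lock at exactly $\tfrac12$--$\tfrac12$ in the midpoint case --- is indeed not settled by your argument, but note that the paper does not settle it either: its Case~1 only deduces $(A-\rho_k I)^2u_k\approx\alpha^2u_k$ ``for large $k$,'' and its Case~3 explicitly allows $\{u_k\}$ to be non-convergent, concluding only membership in the relevant invariant subspace in the limit. Both arguments therefore really establish $\rho_k\to\rho$ together with $\operatorname{dist}(u_k,S)\to 0$ for the appropriate invariant subspace $S$, not convergence to a fixed vector; your identification of the midpoint alternative as an unstable fixed point is a genuine observation about the theorem that the paper glosses over, not a defect of your proposal relative to the paper's own proof.
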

\begin{proof}
We see that 
\begin{eqnarray*}
\|(A-\rho_k I)(u_k+(I-u_ku_k^\ast )t_k)\|^2 = \|(A-\rho_k I)u_k\|^2+\|(A-\rho_k I)(I-u_ku_k^\ast )t_k)\|^2 \\ 
-2\operatorname{Re}(\langle (A-\rho_k I)u_k, (A-\rho_k I)(I-u_ku_k^\ast )t_k) \rangle ) 
\end{eqnarray*}
Taking inner product with $t_k$ on both sides of equation (\ref{equn43}), we get
$$\langle (A-\rho_k
I)u_k, (A-\rho_k I)(I-u_ku_k^\ast )t_k) \rangle = -\|(A-\rho_k I)(I-u_ku_k^\ast )t_k)\|^2$$
Therefore,
\begin{equation*}\label{equn46e}\relax
\|(A-\rho_k I)(u_k+(I-u_ku_k^\ast )t_k)\|^2 = \|(A-\rho_k
I)u_k\|^2-\|(A-\rho_k I)(I-u_ku_k^\ast )t_k)\|^2
\end{equation*}
Using Equation (\ref{equn46ab1}), we obtain
\begin{equation*}\label{equn46g1}\relax
\|(A-\rho_k I)u_{k+1}\|^2 = \frac{\|(A-\rho_k
I)u_k\|^2-\|(A-\rho_k I)(I-u_ku_k^\ast )t_k)\|^2}{1+\|I-u_ku_k^\ast )t_k\|^2}
\end{equation*}
It implies that
\begin{eqnarray}\label{equn46g}\relax
\|(A-\rho_k I)u_k\|^2-\|(A-\rho_k I)u_{k+1}\|^2 ~~~~~~~~~~~~~~~~~~~~~~~~~~~~~~~~~~ \nonumber\\  = 
\frac{\|(A-\rho_k I)u_k\|^2\|I-u_ku_k^\ast )t_k\|^2+\|(A-\rho_k
I)(I-u_ku_k^\ast )t_k)\|^2}{1+\|I-u_ku_k^\ast )t_k\|^2}
\end{eqnarray}
 Due to Theorem~\ref{thm1c}, the left hand side of Equation~(\ref{equn46g}) converges to zero as $k \rightarrow \infty$. Therefore $$\|(A-\rho_k I)u_k\|^2\|I-u_ku_k^\ast )t_k\|^2+\|(A-\rho_k
I)(I-u_ku_k^\ast )t_k)\|^2 \rightarrow 0$$ 
From Equation (\ref{equn32}), we have
\begin{eqnarray*}
\|(A-\rho_k I)u_k\|^2\|(I-u_ku_k^\ast )t_k\|^2+\|(A-\rho_k
I)(I-u_ku_k^\ast )t_k)\|^2 \\~~~~~~~
\geq 2\|(A-\rho_k I)u_k\|^2\|I-u_ku_k^\ast )t_k\|^2
\end{eqnarray*}
 Therefore, $\|(A-\rho_k I)u_k\|^2\|(I-u_ku_k^\ast )t_k\|^2 \rightarrow 0.$ Since $\|(A-\rho_k I)u_k\|^2$ is a monotonically decreasing sequence of nonnegative terms, it converges to a non-negative real number. Suppose it converges to $\alpha^2 \geq 0.$ We have two cases:$$\|(I-u_ku_k^\ast )t_k\|^2 \rightarrow 0~\quad \mbox{or}~ \quad \|(A-\rho_k I)u_k\|^2 \rightarrow 0 $$
Note that in both the cases, Equation~(\ref{newapprox}) can be written as
\begin{equation}\label{equn46h}\relax
(A-\rho_k I)^2 u_{k+1} = \frac{\|(A-\rho_k
I)u_{k+1}\|^2}{u_{k+1}^*u_k}u_k
\end{equation}

\noindent \textbf{Case 1:} $\|(I-u_ku_k^\ast )t_k\| \rightarrow 0$ \\\\
From Step~2 in Algorithm~\ref{alg2}, we have
$$u_{k+1} = \frac{u_k+(I-u_ku_k^\ast )t_k}{\|u_k+(I-u_ku_k^\ast )t_k\|}$$
Since $u_k$ is orthogonal to $(I-u_ku_k^\ast )t_k$ and $\|u_k\| = 1$, we have
$$\|u_k+(I-u_ku_k^\ast )t_k\|^2 = \|u_k\|^2+\|(I-u_ku_k^\ast )t_k\|^2 = 1+\|(I-u_ku_k^\ast )t_k\|^2.$$
Writing $\alpha_k = \sqrt{1+\|(I-u_ku_k^\ast )t_k\|^2},$ we see that
$u_{k+1}-\frac{u_k}{\alpha_k} = \frac{(I-u_ku_k^\ast )t_k}{\alpha_k}.$ Since $\|(I-u_ku_k^\ast )t_k\| \rightarrow 0,~\alpha_k \rightarrow 1$ as $k\to\infty.$ And then $u_{k+1}-u_k \rightarrow 0.$ From  Equation~(\ref{equn46h}), we thus obtain 
$$(A-\rho_k I)^2(u_{k+1}-u_k)+ (A-\rho_k I)^2u_k = \frac{\|(A-\rho_k I)(u_{k+1}-u_k)+ (A-\rho_k I)u_k\|^2}{u_{k+1}^*u_k}$$
As $u_{k+1}-u_k \rightarrow 0,$ 
$(A-\rho_k I)^2u_k = \alpha^2u_k ~~\mbox{for large}~~k.$ Then
$$[(A-\rho_k I)^2 - \alpha^2 I]u_k = 0 ~~\mbox{for large}~~k.$$
Therefore, In this case also, $\rho_k$ is equal to either $\lambda_i +\alpha$ or $\lambda_i-\alpha$ for one or more eigenvalues of $A.$ \\

\noindent \textbf{Case 2:} $\|(A-\rho_k I)u_k\|^2 \rightarrow 0$ and the sequence $\{u_k\}$ converges
to $u.$ Then $u$ is an eigenvector corresponding to $\rho.$ \\

\noindent \textbf{Case 3:}  $\|(A-\rho_k I)u_k\|^2 \rightarrow 0$ and the sequence $\{u_k\}$ is not
convergent. \\\\
Then Equation~(\ref{equn46ab1}) gives
$$\frac{1}{u_{k+1}^\ast u_k} = (1+\|(I-u_ku_k^\ast )t_k\|^2)^{\frac{1}{2}}$$ From Equation~(\ref{equn32a}), we have $\|(I-u_ku_k^\ast )t_k\|^2 \leq 1$.  This together with Equation ~(\ref{equn46h}) gives
$$\|(A-\rho_k I)^2u_{k+1}\| \leq \sqrt{2}\|(A-\rho_k I)u_{k+1}\|^2u_k$$ which implies that $\|(A-\rho_k I)^2u_{k+1}\| \rightarrow 0$. Then, $\|(A-\rho_k I)u_k\|^2\to 0$ and $\|(A-\rho_k I)^2u_{k+1}\|\to 0$ as $k \rightarrow \infty.$ It follows that, as $k \rightarrow \infty,$  $u_k$ is in the invariant subspace corresponding to the eigenvalue $\rho$ .
\end{proof}
 
We show that if $\{u_k\}$ converges to an eigenvector corresponding to the eigenvalue $\lambda$, then the order of convergence is $5$.  The proof follows the line of proof of Theorem~4.7.1 in \cite{par}.

\begin{theorem}\label{rq}\relax
Let the sequence $\{u_k\}$ generated by MSJD method converge to an eigenvector $x$ of $A$ corresponding to the eigenvalue $\lambda$. Let $\Phi_k$ denote the angle between vectors $u_k$ and $x$. Then $|\Phi_{k+1}| \leq |\Phi_k|^5$ for large $k.$
\end{theorem}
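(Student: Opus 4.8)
The plan is to exploit the closed form (\ref{newapprox}), available because $A$ is symmetric, which exhibits MSJD as an inverse iteration driven by the \emph{squared} resolvent with a Rayleigh-quotient shift:
$$u_{k+1} = \frac{(A-\rho_k I)^{-2}u_k}{\|(A-\rho_k I)^{-2}u_k\|}.$$
This is precisely the setting of Theorem~4.7.1 of \cite{par}, except that the ordinary resolvent $(A-\rho_k I)^{-1}$ is replaced by $(A-\rho_k I)^{-2}$; the whole argument then amounts to tracking how the tangent of the angle $\Phi_k$ contracts under one step. First I would fix an orthonormal eigenbasis $\{x_j\}$ of $A$ with $x=x_1$, $\lambda=\lambda_1$, and write $u_k = x\cos\Phi_k + z_k\sin\Phi_k$ with $z_k\perp x$, $\|z_k\|=1$ and $z_k=\sum_{j\ge 2}c_j^{(k)}x_j$. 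Since this basis diagonalises $(A-\rho_k I)^{-2}$, the component of the new iterate along $x$ is $\cos\Phi_k/(\lambda-\rho_k)^2$ while its orthogonal component has norm $\sin\Phi_k\big(\sum_{j\ge2}|c_j^{(k)}|^2/(\lambda_j-\rho_k)^4\big)^{1/2}$, so that
\begin{equation*}
\tan\Phi_{k+1} = \tan\Phi_k\,(\lambda-\rho_k)^2\Big(\sum_{j\ge2}\frac{|c_j^{(k)}|^2}{(\lambda_j-\rho_k)^4}\Big)^{1/2}.
\end{equation*}

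Everything then reduces to estimating the two factors on the right. The first key estimate is the standard quadratic accuracy of the Rayleigh quotient: setting $\mu_k:=z_k^\ast A z_k$ and using $x^\ast A z_k=0$, one gets $\rho_k-\lambda=(\mu_k-\lambda)\sin^2\Phi_k$, hence $(\lambda-\rho_k)^2=(\mu_k-\lambda)^2\sin^4\Phi_k$. This is the $\sin^4\Phi_k=O(\Phi_k^4)$ factor which, together with the leading $\tan\Phi_k$, yields the fifth power; here $|\mu_k-\lambda|$ is bounded by the spectral spread $\Delta$, uniformly in $k$. The second factor is the amplification sum. Because $u_k\to x$ forces $\rho_k\to\lambda$ by continuity of the Rayleigh quotient, and because $\lambda$ is isolated from the remaining eigenvalues by the gap $\delta:=\min_{j\ge2}|\lambda_j-\lambda|>0$, for all large $k$ we have $|\lambda_j-\rho_k|\ge\delta/2$, whence $\sum_{j\ge2}|c_j^{(k)}|^2/(\lambda_j-\rho_k)^4\le 16/\delta^4$ since $\sum_j|c_j^{(k)}|^2=1$.

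Assembling these bounds gives $\tan\Phi_{k+1}\le (4\Delta^2/\delta^2)\,\sin^4\Phi_k\,\tan\Phi_k$ for all large $k$; passing from $\tan$ and $\sin$ to the angle in the small-angle regime yields $|\Phi_{k+1}|\le C\,|\Phi_k|^5$ with $C$ depending only on $\delta$ and $\Delta$, which is the asserted fifth-order (quintic) convergence, the displayed clean inequality being the order-of-convergence statement. I expect the main obstacle to be the uniform control of the amplification factor: one must guarantee that for large $k$ the shift $\rho_k$ stays bounded away from every eigenvalue other than $\lambda$, which is exactly where the isolation of $\lambda$ and the convergence $\rho_k\to\lambda$ are indispensable; a secondary point is the structural contrast with ordinary Rayleigh quotient iteration, where the single resolvent supplies only $(\lambda-\rho_k)^1=O(\Phi_k^2)$ and one recovers the classical cubic rate, whereas the squared resolvent here supplies $(\lambda-\rho_k)^2=O(\Phi_k^4)$ and so raises the exponent from $1+2=3$ to $1+4=5$.
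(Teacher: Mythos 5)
Your proposal follows essentially the same route as the paper's own proof: both exploit the closed form (\ref{newapprox}) to view MSJD as inverse iteration with the squared shifted resolvent, decompose $u_k = x\cos\Phi_k + v_k\sin\Phi_k$, derive the recursion $\tan\Phi_{k+1} = (\lambda-\rho_k)^2\|(A-\rho_k I)^{-2}v_k\|\tan\Phi_k$ (your eigenbasis sum is exactly this norm), insert the Rayleigh-quotient identity $\lambda-\rho_k = (\lambda-\rho(v_k))\sin^2\Phi_k$, and bound the resolvent factor via the spectral gap once $\rho_k\to\lambda$. Your version is if anything slightly more careful, since you make the gap constant explicit and correctly note that the argument yields $|\Phi_{k+1}|\le C|\Phi_k|^5$, i.e.\ fifth order, which is what the paper's unqualified inequality should be read as.
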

\begin{proof}
Since $\Phi_k$ is the angle between the unit vectors $u_k$ and $x$, $u_k$ can be written as
\begin{equation}\label{equn47}\relax
u_k = x\cos\Phi_k +v_k\sin\Phi_k  
\end{equation}
Where $v_k^\ast x = 0$ and $\|v_k\|= \|x\|=1.$ Let $\rho_k:= \rho(u_k)$ be the Rayleigh quotient of $A$ with respect to $u_k$. Pre-multiply  Equation~(\ref{equn47}) with  $(A-\rho_kI)^{-2}$ and use Spectral mapping theorem to obtain  
\begin{equation}\label{equn48}\relax
(A-\rho_kI)^{-2}u_k =
\frac{x\cos\Phi_k}{(\lambda-\rho_k)^2}+\sin\Phi_k (A-\rho_k)^{-2}v_k
\end{equation}
From Equation~(\ref{newapprox}), the next eigenvector approximation $u_{k+1}$ in MSJD method satisfies the
following:
\begin{equation*}\label{equn46a}\relax
(A-\rho_k I)^2u_{k+1} = \frac{u_k}{\|(A-\rho_k I)^{-2} u_k\|}
\end{equation*}
Using Equation (\ref{equn48}), we have,
$$u_{k+1} =  \frac{\frac{x\cos\Phi_k}{(\lambda-\rho_k)^2}+\sin\Phi_k \cdot (A-\rho_k)^{-2}v_k}{\|(A-\rho_kI)^{-2}u_k\|}$$
Let $v_{k+1}$ be a unit vector such that $u_{k+1} = x\cos\Phi_{k+1} +v_{k+1}\sin\Phi_{k+1}$ and $v_{k+1}^\ast x = 0.$ Then 
$$\cos\Phi_{k+1} = \frac{\cos\Phi_k}{|\lambda-\rho_k|^2 \|(A-\rho_kI)^{-2}u_k\|}$$ 
$$\sin\Phi_{k+1} = \frac{\|(A-\rho_kI)^{-2}v_k\|\sin\Phi_k}{\|(A-\rho_kI)^{-2}u_k\|}$$ 
$$v_{k+1} = \frac{(A-\rho_kI)^{-2}v_k}{\|(A-\rho_kI)^{-2}v_k\|}$$ 
We have
\begin{align}\label{equn49}\relax
\tan\Phi_{k+1} =
\frac{\sin\Phi_k\|(A-\rho_kI)^{-2}v_k\|}{\cos\Phi_k 
(\lambda-\rho_k)^{-2}}
 = (\lambda-\rho_k)^{2}\|(A-\rho_kI)^{-2}v_k\|\tan\Phi_k
\end{align}
From equation~(\ref{equn47}), we have
$$ \rho_k = (x\cos\Phi_k +v_k\sin\Phi_k)^\ast A( x\cos\Phi_k +v_k\sin\Phi_k)$$
Since $Ax = \lambda x$ and $v_k^\ast x=0,$ it follows that $\rho_k = \lambda \cos^2\Phi_k+\rho(v_k)\sin^2\Phi_k$. Therefore
\begin{equation}\label{equn50}\relax
(\lambda-\rho_k) = (\lambda-\rho(v_k))\sin^2\Phi_k
\end{equation}
Using this in Equation~(\ref{equn49}), we obtain
\begin{equation}\label{equn51}\relax
\tan\Phi_{k+1} =
(\lambda-\rho(v_k))^2\|(A-\rho_kI)^{-2}v_k\|\sin^4\Phi_k\tan\Phi_k
\end{equation}
Since $v_k$ is orthogonal to $x,$ 
$$\|(A-\rho_kI)^{-2}v_k\| \leq \frac{1}{min_{\lambda_i \neq \lambda} |\lambda_i-\rho_k|^2}$$ 
Since $\Phi_k \rightarrow 0$,  Equation (\ref{equn50}) implies that $|\lambda-\rho_k| \rightarrow 0$ as $k \rightarrow \infty$. As $\rho_k$ converges to $\lambda$ as $k \rightarrow \infty$, there exists a real number $M$ such that 
\begin{equation}\label{equn52}\relax
|\lambda_i-\rho_k| \geq M~~~ \mbox{for large}~~ k
\end{equation}
for all $\lambda_i \neq \lambda$. Now Equations (\ref{equn51}) and (\ref{equn52})
together with $\tan\Phi = O(\Phi)$ and $\sin(\Phi) = O(\Phi)$ for small $\Phi$, imply that $|\Phi_{k+1}| \leq |\Phi_k|^5$ for large $k.$ 
\end{proof}
\section{Numerical experiments}

The MSJD method has been tested on many numerical examples for checking whether it really works, using Matlab R2014A on an Intel core 3 processor. Out of these we report three examples for demonstrating various features.  We compare the performance of the Jacobi-Davidson method and the proposed MSJD method by solving the corresponding correction equations
\begin{equation}\label{ojd}\relax
(I-uu^\ast )(A-\theta I)(I-uu^\ast )t = -(A-\theta I) u
\end{equation}
\begin{equation}\label{mjd}\relax
 (I-uu^\ast )(A-\theta I)^\ast (A-\theta I)(I-uu^\ast )t = -(I-uu^\ast )(A-\theta I)^\ast (A-\theta I)u
\end{equation} 
using Gaussian elimination as well as with the approximate solution obtained after a few steps of GMRES. Since the matrices in the left hand side of the correction equation~(\ref{ojd}) in Jacobi-Davidson method may become ill-conditioned, especially, when the matrix $A$ has multiple eigenvalues, it has been proposed in \cite{slei} to use the following equivalent form of (\ref{ojd}):
\begin{equation}\label{jds}\relax
\big((I-uu^\ast )A(I-uu^\ast )-\theta I \big )t = -(A-\theta I)u,~~t \perp u. 
\end{equation}
In a similar vein, we define the following correction equation in MSJD method: 
\begin{align}\label{mds}\relax
\begin{split}
\big((I-uu^\ast )A^\ast A(I-uu^\ast )- \theta (I-uu^\ast )A^\ast (I-uu^\ast ) ~~~~~~~~~~~~~~~~~~~~~~~~~~~~~~~\\ - \bar{\theta}(I-uu^\ast )A(I-uu^\ast )+|\theta |^2 I\big)t  = -(I-uu^\ast )(A-\theta I)^\ast (A-\theta I)u, t \perp u
\end{split}
\end{align}
which is theoretically equivalent to (\ref{mjd}).\\\\
In the following examples, we compare the performance of the correction equation~(\ref{ojd}) in Jacobi-Davidson method with the correction equation~(\ref{mjd}) in Modified Jacobi-Davidson method. We also compare the numerical results obtained using the correction equation~(\ref{jds}) in Jacobi-Davidson method with the correction equation~(\ref{mds}) in Modified Jacobi-Davidson method. We also demonstrate in the following examples that the performance of JD method is numerically different for the two theoretically equivalent  correction equations (\ref{ojd}) and (\ref{jds}). This is because of the presence of rounding errors in floating point arithmetic. In a similar vein, we also demonstrate that the MJD performs numerically different for the two theoretically equivalent correction equations (\ref{mjd}) and (\ref{mds}). \\

In all the figures,``JD" means either the correction equation~(\ref{ojd}) or (\ref{jds}) in Jacobi-Davidson method is used whereas ``MJD" means either the correction equation~(\ref{mjd}) or (\ref{mds}) is used. In order to check the performance of the proposed method, without restarting, we consider the following algorithm:
\begin{alg}\label{alg1}\title{Unrestarted algorithm}\relax 
\textsf{1. Solve the correction equation, either (\ref{ojd}) or  (\ref{jds}) in Jacobi-Davidson method or either (\ref{mjd}) or (\ref{mds}) in Modified Jacobi Davidson method.\\
2. Expand the subspace with the vector obtained in Step 1, as explained in Section~2.\\
3. Apply Rayleigh-Ritz projection/Harmonic Rayleigh-Ritz projection for eigenvalues, and calculate the norm of residuals associated with refined Ritz/Harmonic Ritz vectors. If they reach $`tol',$ stop. Otherwise, go to Step 1.}
\end{alg} 
When a good approximation to eigenvalues in the interior of the spectrum is required we use Harmonic Rayleigh-Ritz projection \cite{int} in Algorithm~1.  The results on performance of Algorithm~1 for three Examples are reported below.

\begin{example}\label{eg3}\relax
Consider the diagonal matrix $A$ of order $100$ with diagonal elements as $(\frac{j}{100})^2-0.8$ for $j = 1,2...100$. Take the initial vector as the vector with each entry equal to $1$.  It is required to compute an eigenvalue with the smallest absolute value.
\end{example}
This matrix is same as in Example 3 in \cite{slei} and is used to show that the proposed method can also be used in computing interior eigenvalues. As the method is not restarted, in this and next example, iteration number denotes the size of subspace used in  Harmonic Projection. For this example, we used Harmonic projection in Algorithm~\ref{alg1} and Harmonic Ritz vectors in the correction equations (\ref{ojd}) and (\ref{mjd}).
\begin{figure}[!htb]
\begin{minipage}{0.4975\linewidth}
\begin{center} 
\includegraphics[width = 2in,height=2in]{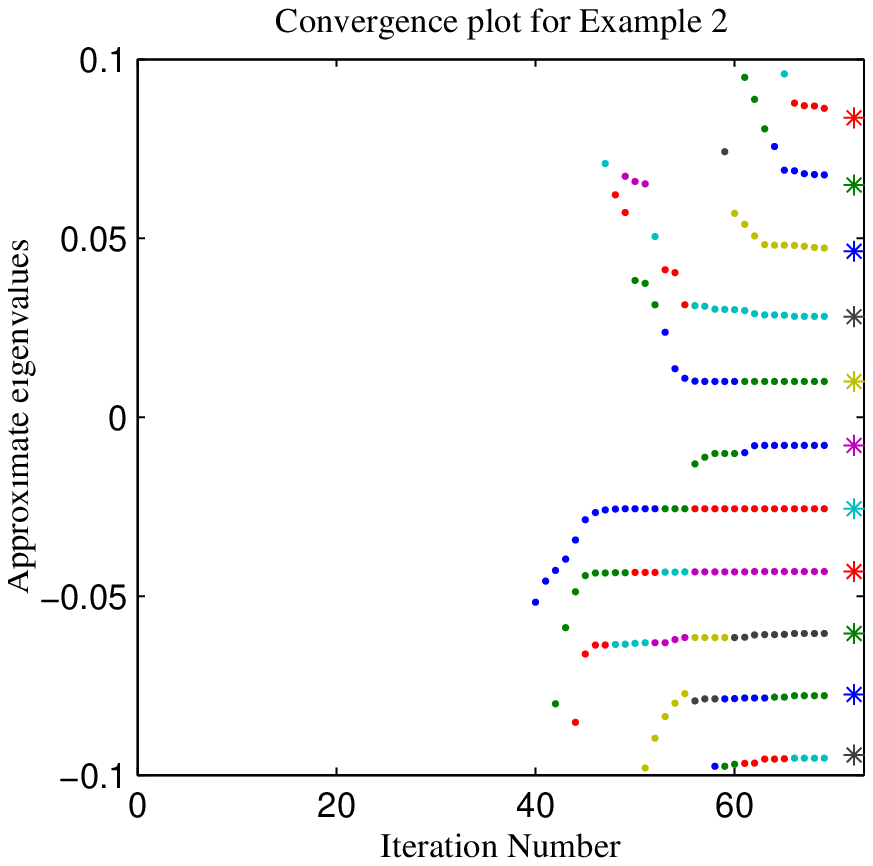}
\caption{\scriptsize \sl \hspace{4pt}Using GMRES
 for equation (\ref{ojd}) with \\ Harmonic projection} 
\label{fig:cegrit20} 
\end{center} 
\end{minipage}
\mbox{\hspace{0.5cm}}
\begin{minipage}{0.4975\linewidth}
\begin{center} 
\includegraphics[width = 2in,height=2in]{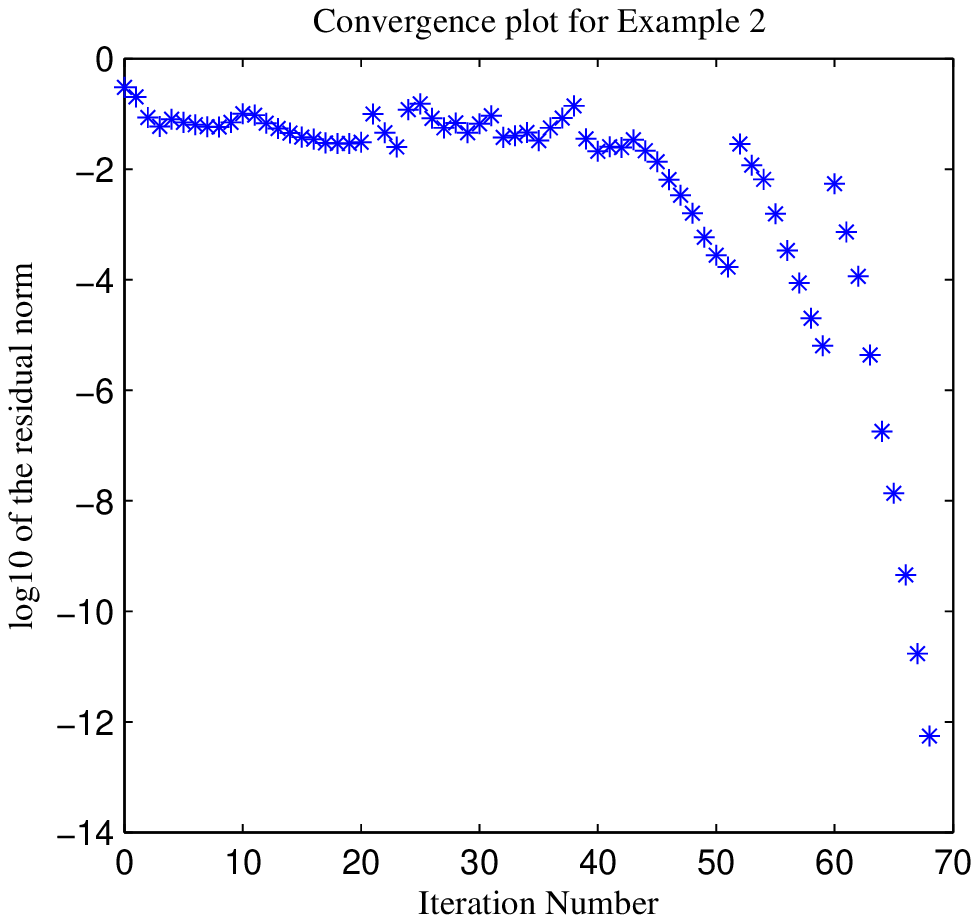}
\caption{\scriptsize \sl  Using GMRES
 for equation (\ref{ojd}) with \\ Harmonic projection} 
\label{fig:cegrit19} 
\end{center} 
\end{minipage}
\end{figure}
\begin{figure}[!htb]
\begin{minipage}{0.4975\linewidth}
\begin{center} 
\includegraphics[width = 2in,height=2in]{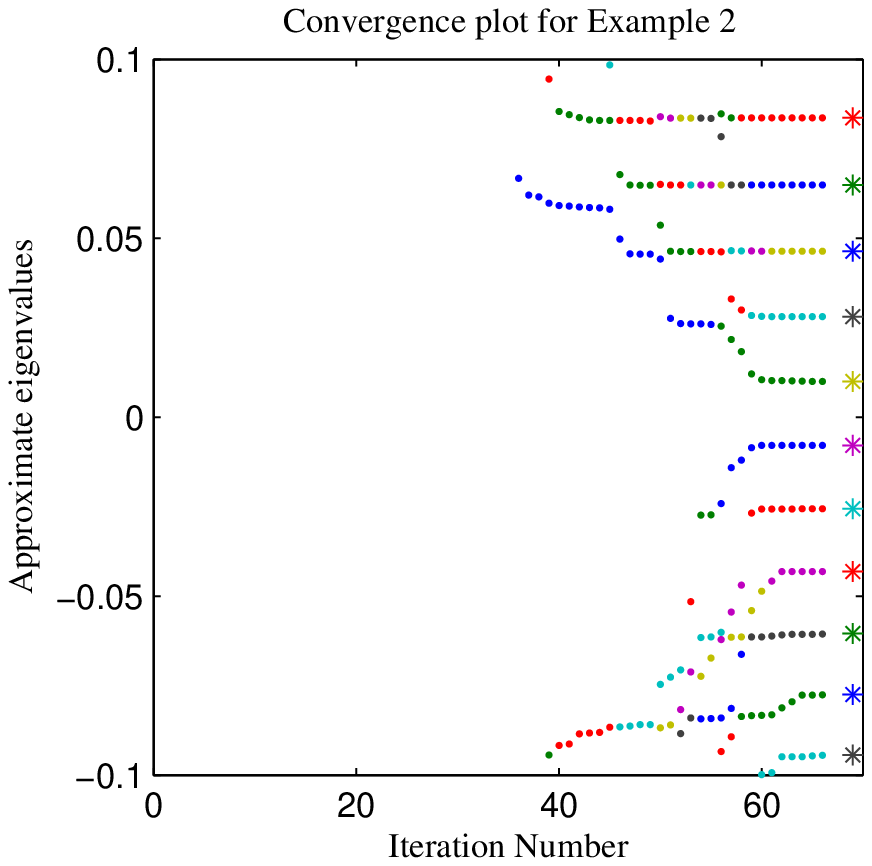}
\caption{\scriptsize \sl Using GMRES
 for equation (\ref{mjd}) with \\Harmonic projection} 
\label{fig:cegrit22} 
\end{center} 
\end{minipage}
\mbox{\hspace{0.5cm}}
\begin{minipage}{0.4975\linewidth}
\begin{center} 
\includegraphics[width = 1.9in,height=1.9in]{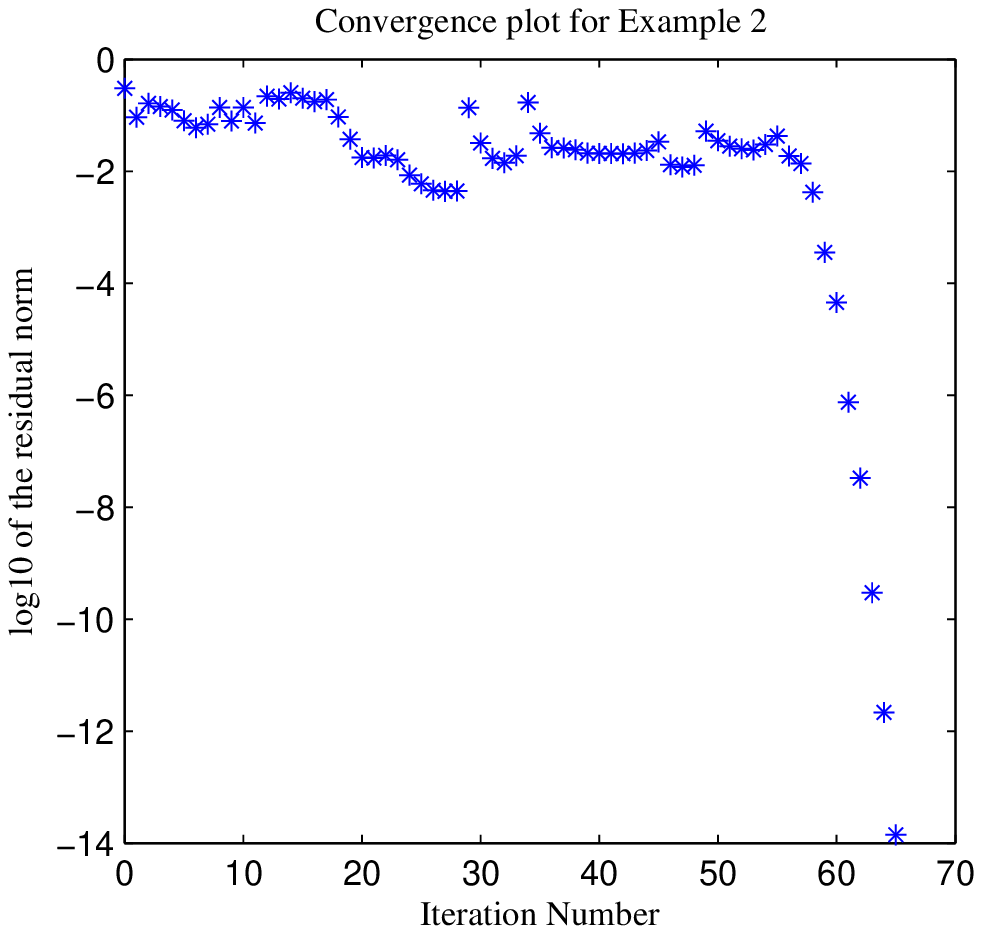}
\caption{\scriptsize \sl  Using GMRES for equation (\ref{mjd}) with \\Harmonic projection} 
\label{fig:cegrit21} 
\end{center} 
\end{minipage}
\end{figure}

 With an approximate solution of Equations (\ref{ojd}) and (\ref{mjd}) obtained using $8$ steps of GMRES, the convergence to the required eigenvalue is observed at  iterations $69 $ and $66,$ respectively. The $\log_{10}$ of the norm of the residuals are $-12.26$ and $-13.65,$ respectively. The numerical results with the correction equation (\ref{ojd}) are shown in Figures \ref{fig:cegrit20}-\ref{fig:cegrit19}. Figures \ref{fig:cegrit22}-\ref{fig:cegrit21} show the results, when the correction equation (\ref{mjd}) is used. In Figures \ref{fig:cegrit20} and \ref{fig:cegrit22}, $`\ast'$ represents the exact eigenvalues of $A$. From these Figures, it is clear that convergence results of approximate eigenvalues in both the methods are on par.

\begin{figure}[!htb]
\begin{minipage}{0.4975\linewidth}
\begin{center} 
\includegraphics[width = 2in,height=2in]{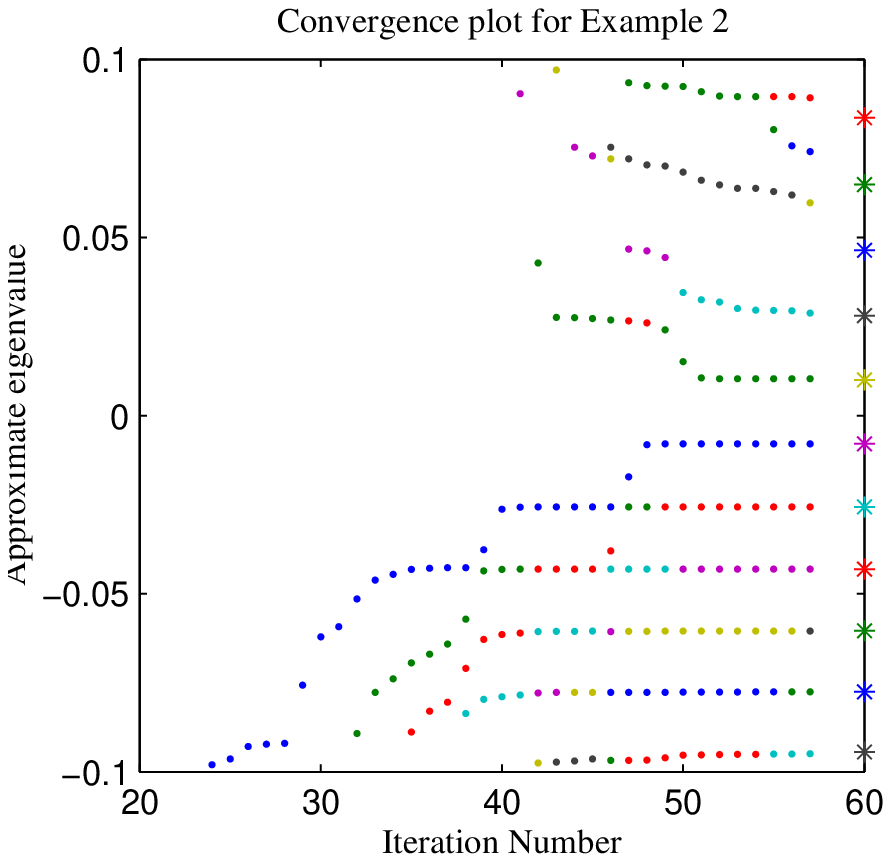}
\caption{\scriptsize \sl Using GMRES  for equation (\ref{jds}) with \\Harmonic projection} 
\label{fig:cegrit24} 
\end{center} 
\end{minipage}
\mbox{\hspace{0.5cm}}
\begin{minipage}{0.4975\linewidth}
\begin{center} 
\includegraphics[width = 2in,height=2in]{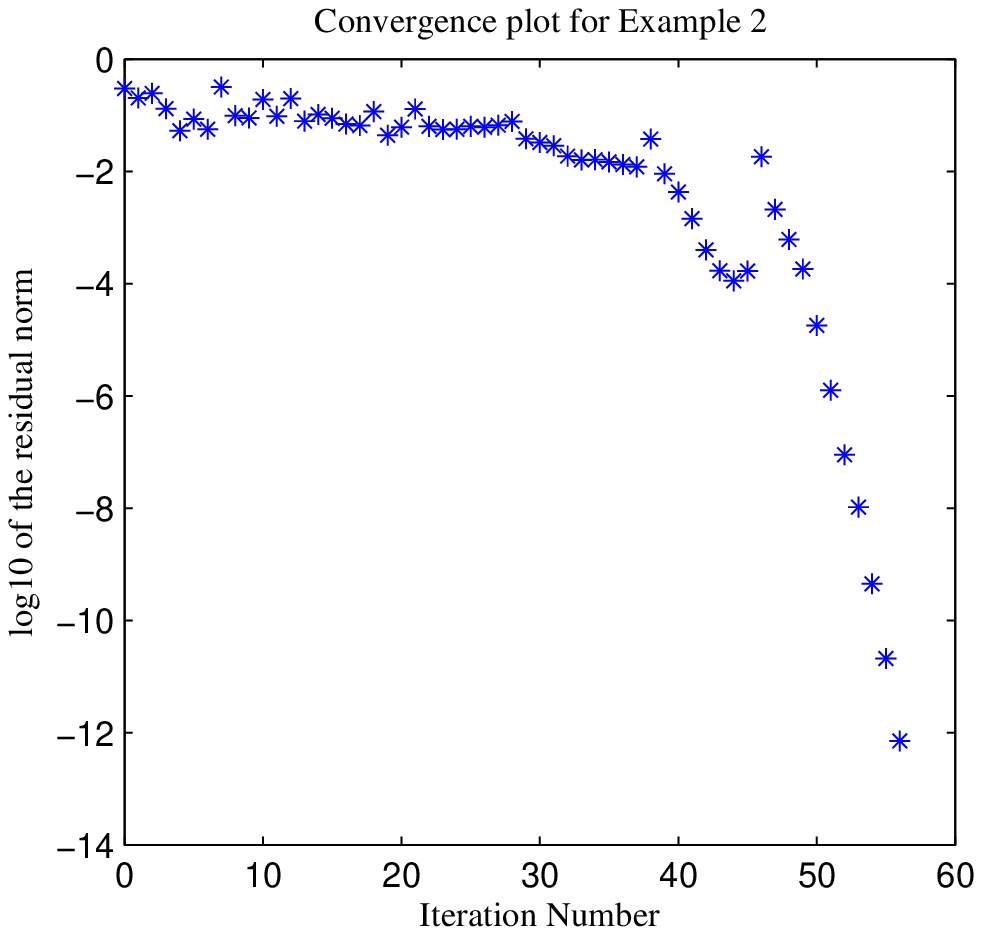}
\caption{\scriptsize \sl  Using GMRES for equation (\ref{jds}) with \\Harmonic projection} 
\label{fig:cegrit23} 
\end{center} 
\end{minipage}
\end{figure}

\begin{figure}[htb]
\begin{minipage}{0.4975\linewidth}
\begin{center} 
\includegraphics[width = 2in,height=2in]{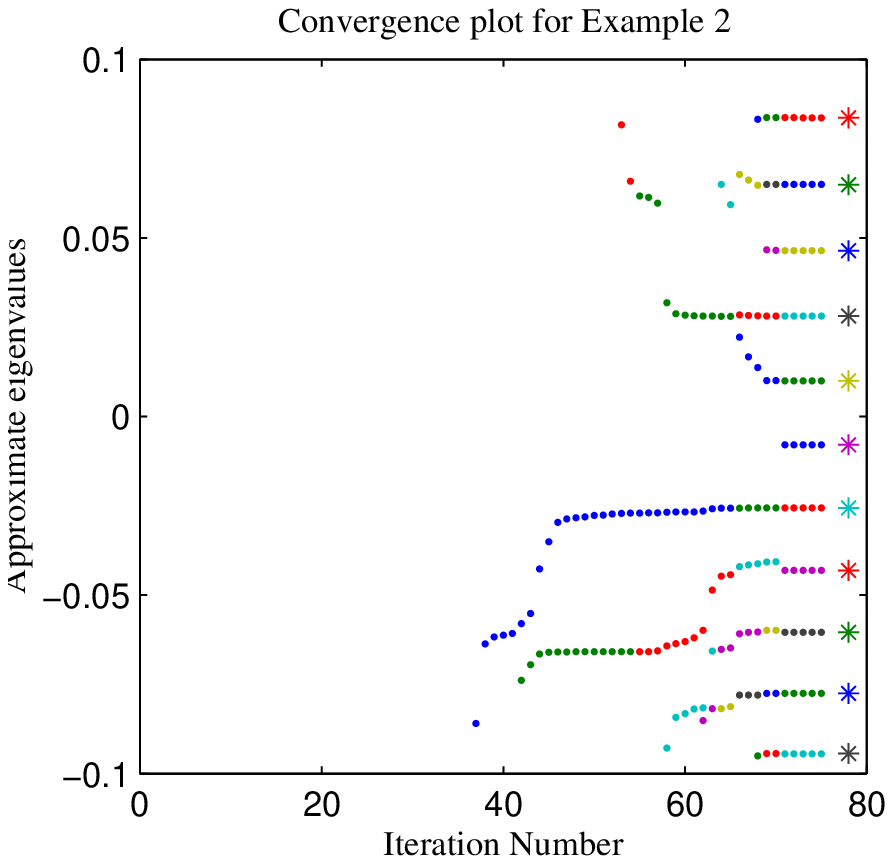}
\caption{\scriptsize \sl Using GMRES for equation (\ref{mds}) with \\Harmonic projection} 
\label{fig:cegrit26} 
\end{center} 
\end{minipage}
\mbox{\hspace{0.5cm}}
\begin{minipage}{0.4975\linewidth}
\begin{center} 
\includegraphics[width = 2in,height=2in]{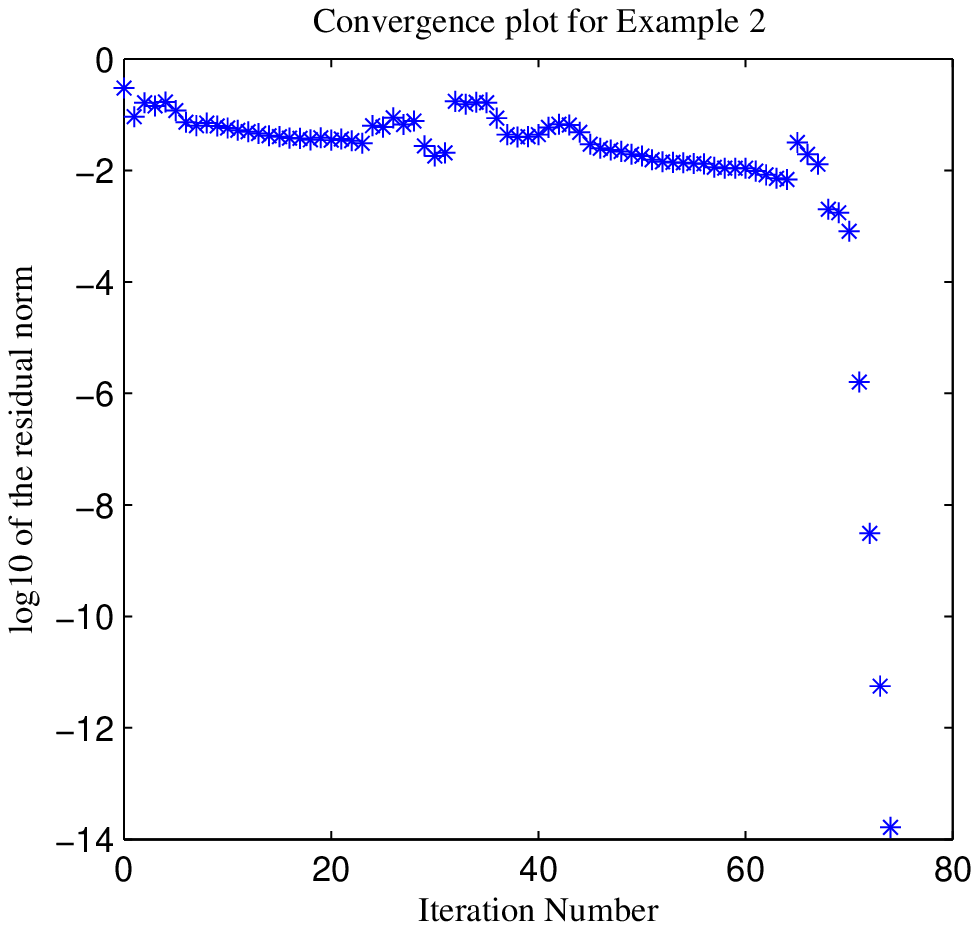}
\caption{\scriptsize \sl  Using GMRES for equation (\ref{mds}) with \\Harmonic projection} 
\label{fig:cegrit25} 
\end{center} 
\end{minipage}
\end{figure}

Similarly, using the solutions of the correction equations (\ref{jds}) and (\ref{mds}) obtained using $8$  steps of GMRES, convergence occurs at iterations $57$ and $75,$ with $\log_{10}$ of residual norms as $-12.15$ and $-13.78,$ respectively. Convergence for these cases are shown in Figures \ref{fig:cegrit24}-\ref{fig:cegrit23} and \ref{fig:cegrit26}-\ref{fig:cegrit25}, respectively. Using the correction equation (\ref{jds}), convergence of norms of residual vectors is faster when compared to MJD using the correction equation (\ref{mds}).

If Gaussian elimination is used instead of GMRES to solve the correction equations  (\ref{jds}) and (\ref{mds}), with the correction equation (\ref{jds}), convergence to the required eigenvalue occurred at $57^{th}$ iteration, whereas with the correction equation (\ref{mds}),  convergence is achieved at $46^{th}$ iteration. Thus, when the correction equations (\ref{jds}) and (\ref{mds}) are solved by using the Gaussian elimination, we noticed that with the  correction equation (\ref{mds}) the convergence to  the required eigenvalue is faster than that with the correction equation (\ref{jds}).  This is exactly opposite to the scenario that we observed when the correction equations (\ref{jds}) and (\ref{mds}) are solved approximately by using the GMRES.

In Examples 2, even though the matrix is symmetric, we used the GMRES to solve the correction equations approximately as we did not take the advantage of this for generating  matrices $H_k$ in JD and MJD methods (For details, see Section-2). In the following example, we consider a non-Hermitian matrix. We observe  that like Jacobi-Davidson method, the modified method is also useful to approximate non-real eigenvalues. 

\begin{example}\label{eg4}\relax
  The matrix in this Example is a block diagonal matrix $diag\{A1,A2\}$ where
$$ A1 =\begin{bmatrix}
 0.8+0.1i & 0\\ 0 & 0.8-0.1i
\end{bmatrix} $$
and $A2$ is the matrix of Example 2. Again, each entry of the initial vector is taken as $1$. We apply Harmonic projection with shift $0.81+0.08i$ to find the eigenvalue $0.8+0.1i$.
\end{example}

\begin{figure}[!htb]
\begin{minipage}{0.4975\linewidth}
\begin{center} 
\includegraphics[width = 2in,height=2in]{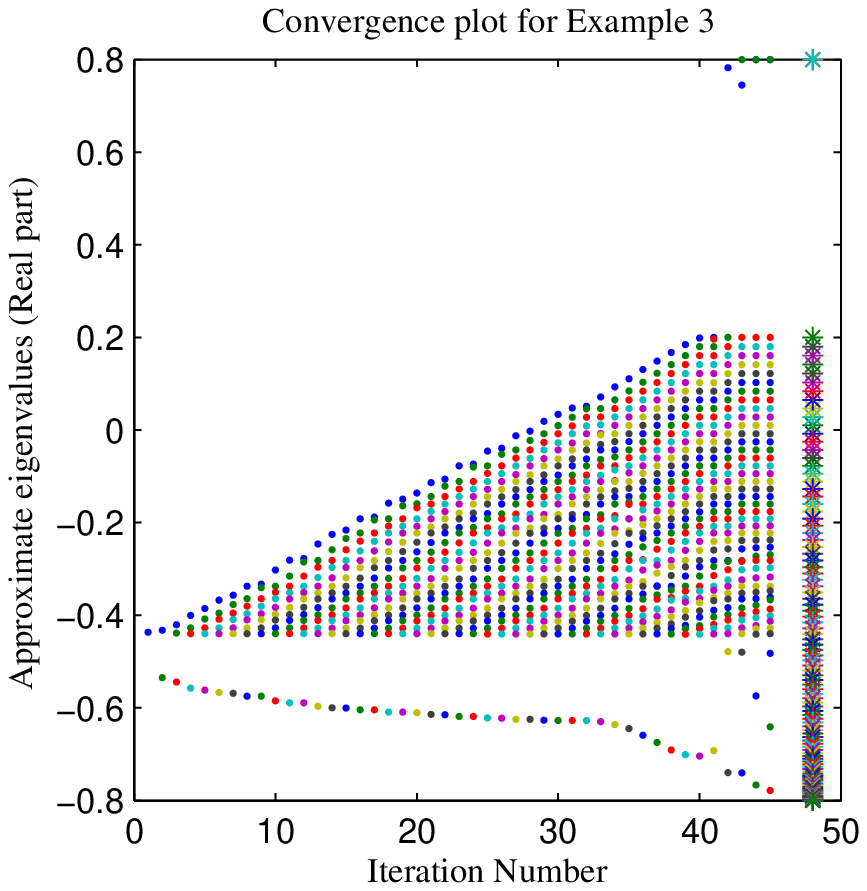}
\caption{\scriptsize \sl Using GMRES for equation (\ref{jds}) \\with Harmonic projection} 
\label{fig:cegrit32} 
\end{center} 
\end{minipage}
\mbox{\hspace{0.5cm}}
\begin{minipage}{0.4975\linewidth}
\begin{center} 
\includegraphics[width = 2in,height=2in]{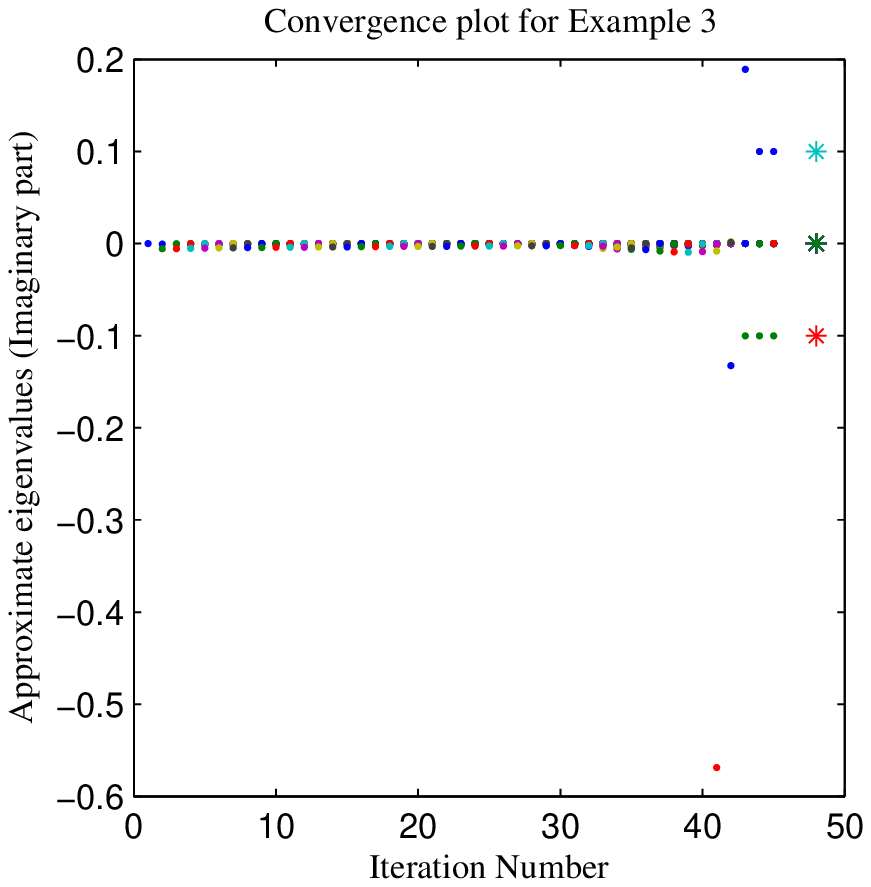}
\caption{\scriptsize \sl  Using GMRES for equation (\ref{jds}) \\with Harmonic projection} 
\label{fig:cegrit33} 
\end{center} 
\end{minipage}
\end{figure}

\begin{figure}[!htb]
\begin{minipage}{0.4975\linewidth}
\begin{center}
\includegraphics[width = 2in,height=2in]{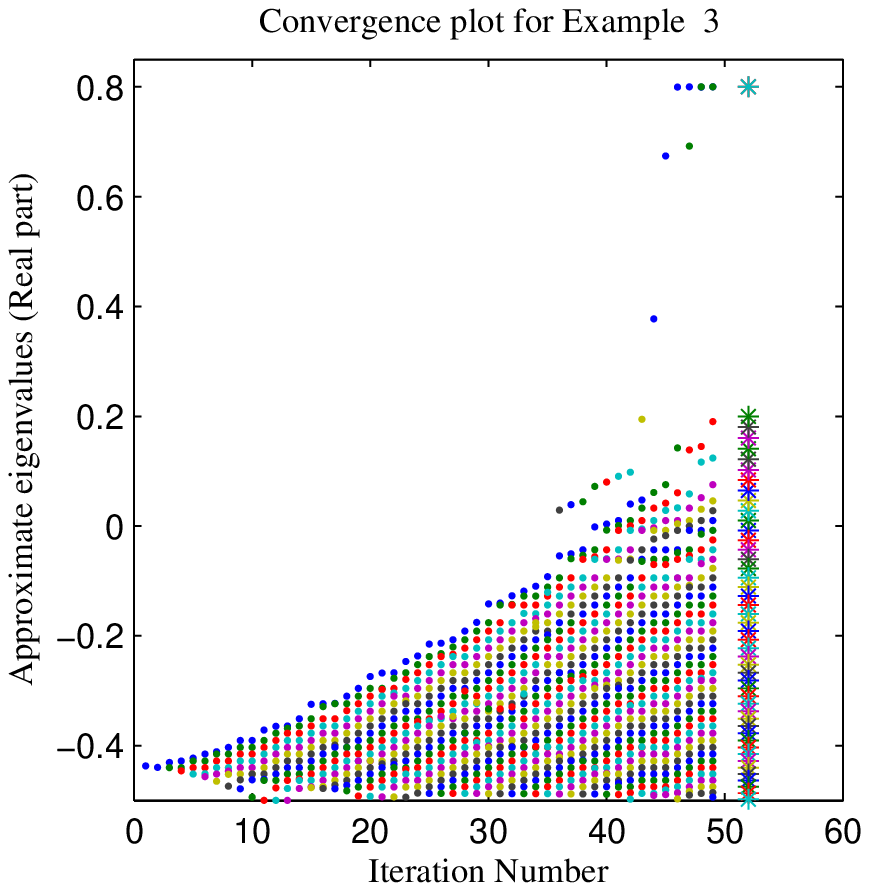}
\caption{\scriptsize \sl Using GMRES for equation (\ref{mds}) \\with Harmonic projection} 
\label{fig:cegrit28} 
\end{center}
\end{minipage}
\mbox{\hspace{0.5cm}}
\begin{minipage}{0.4975\linewidth}
\begin{center} 
\includegraphics[width = 2in,height=2in]{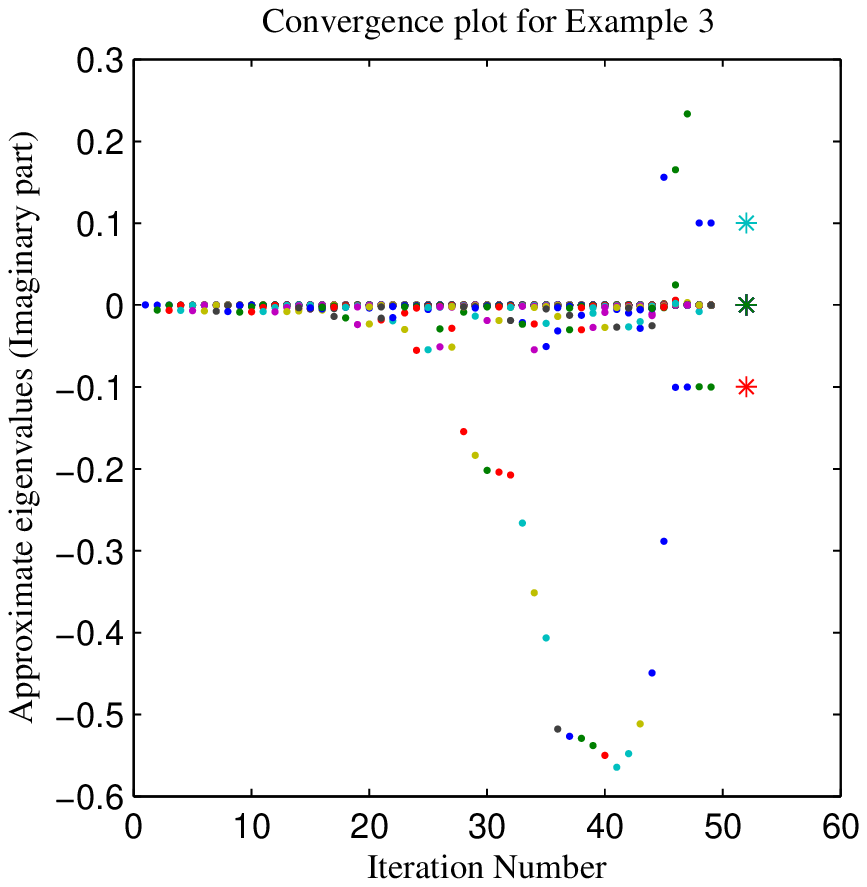}
\caption{\scriptsize \sl  Using GMRES for equation (\ref{mds}) \\with Harmonic projection} 
\label{fig:cegrit29} 
\end{center} 
\end{minipage}
\end{figure}

Here, we compare the results obtained using the correction equations (\ref{jds}) and (\ref{mds}). In both the approaches the resulting linear systems are solved using Gaussian elimination. With these two correction equations the convergence to the desired eigenvalue  occurred at $44^{th}$ and $48^{th}$ iterations, respectively. In both the cases, apart from the desired eigenvalue,the algorithm also finds the eigenvalue $0.8-0.1i$ which is far from the shift.  The same is observed, using Gaussian elimination for solving the correction equation (\ref{ojd}). 
For the correction equation (\ref{jds}), Figures \ref{fig:cegrit32} and \ref{fig:cegrit33} show the convergence history of the real parts and imaginary parts of the harmonic Ritz values, respectively.  Similarly, for the correction equation (\ref{mds}), the Figures \ref{fig:cegrit28} and \ref{fig:cegrit29} show the convergence history of the harmonic Ritz values. In all these figures, we have used the same symbols as in the previous example.
 \begin{figure}[!htb]
\begin{minipage}{0.4975\linewidth}
\begin{center} 
\includegraphics[width = 2in,height=2in]{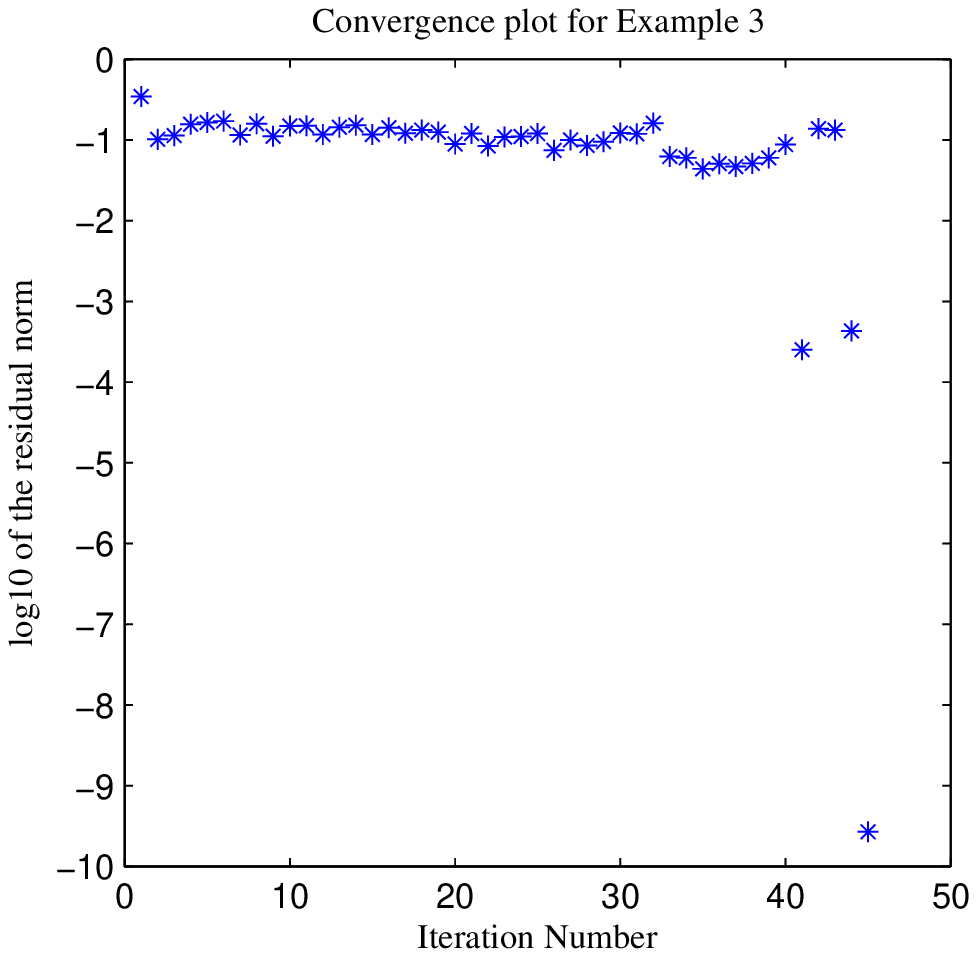}
\caption{\scriptsize \sl  Using GMRES for equation (\ref{jds}) \\with Harmonic projection} 
\label{fig:cegrit31} 
\end{center} 
\end{minipage}
\mbox{\hspace{0.5cm}}
\begin{minipage}{0.4975\linewidth}
\begin{center} 
\includegraphics[width = 2in,height=2in]{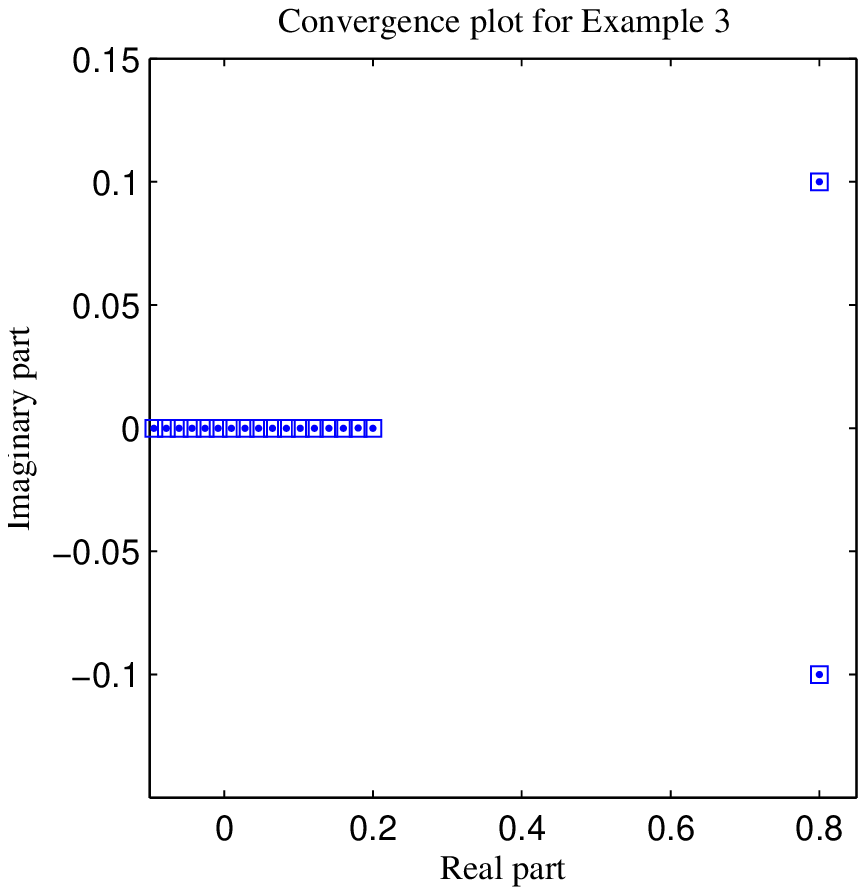}
\caption{\scriptsize \sl Using GMRES for equation (\ref{jds}) \\with Harmonic projection} 
\label{fig:cegrit34} 
\end{center} 
\end{minipage}
\end{figure}

\begin{figure}[!htb]
\begin{minipage}{0.4975\linewidth}
\begin{center} 
\includegraphics[width = 2in,height=2in]{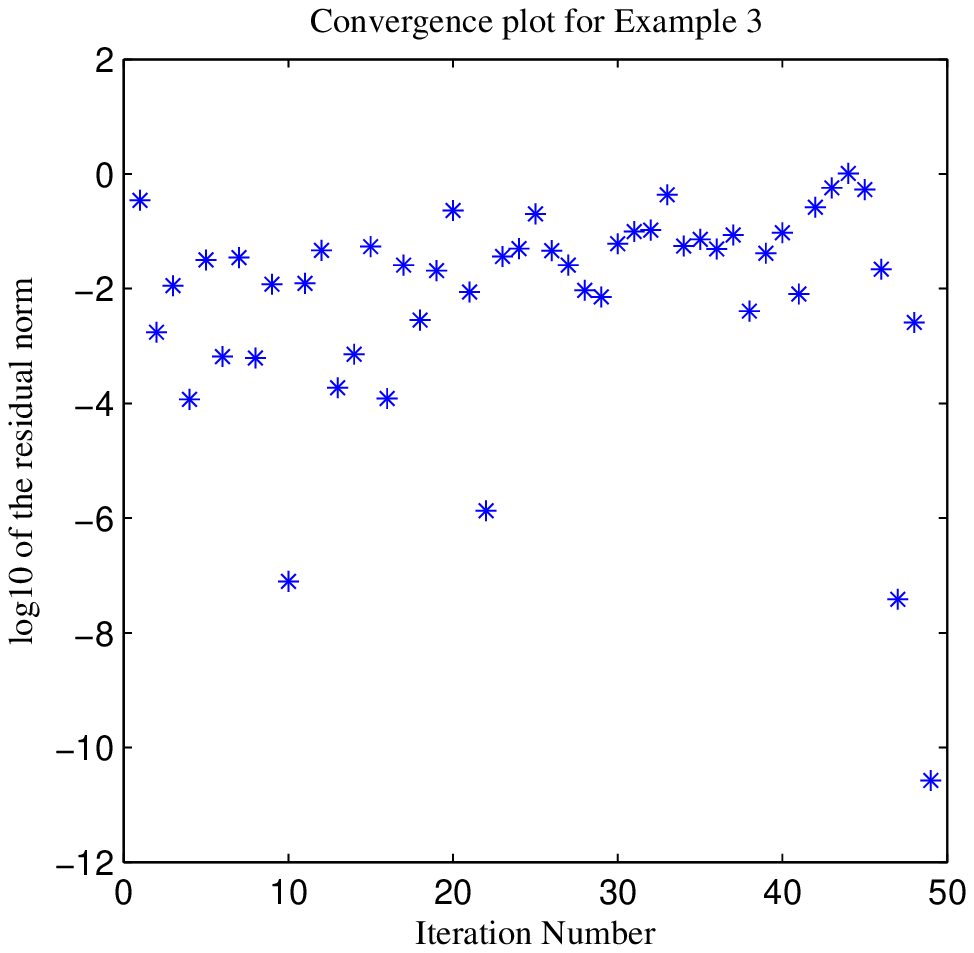}
\caption{\scriptsize \sl  Using GMRES for equation (\ref{mds}) \\with Harmonic projection} 
\label{fig:cegrit27} 
\end{center} 
\end{minipage}
\mbox{\hspace{0.5cm}}
\begin{minipage}{0.4975\linewidth}
\begin{center} 
\includegraphics[width = 2in,height=2in]{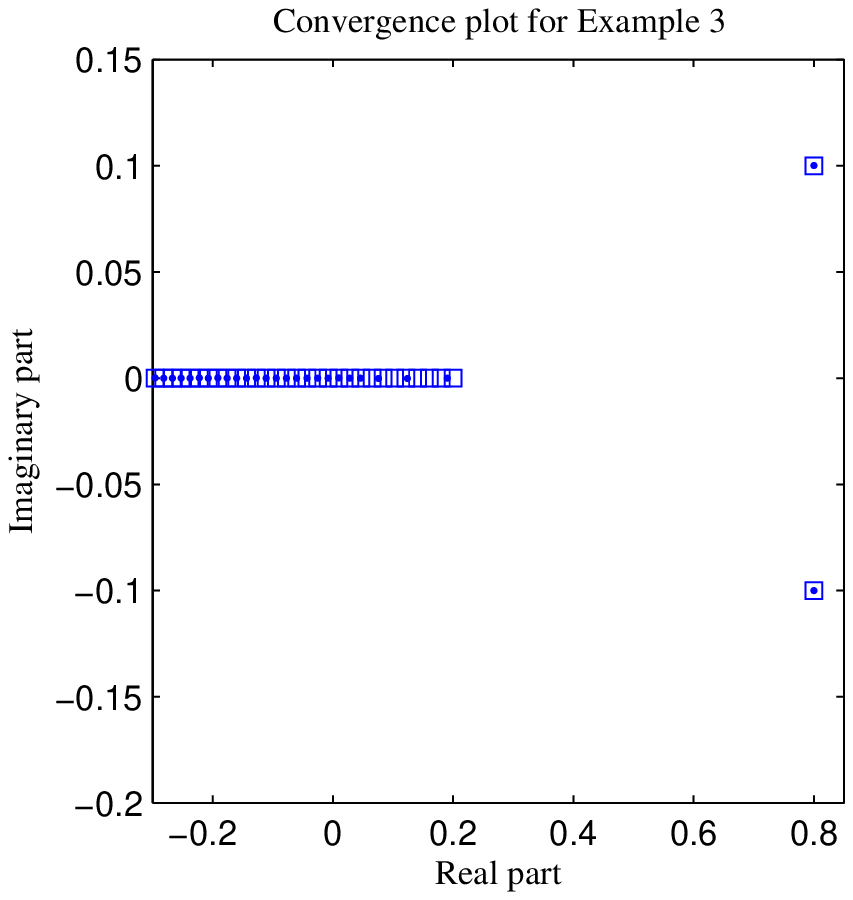}
\caption{\scriptsize \sl Using GMRES for equation (\ref{mds}) \\with Harmonic projection} 
\label{fig:cegrit30} 
\end{center} 
\end{minipage}
\end{figure}

For the correction equations (\ref{jds}) and (\ref{mds}), the convergence of norm of residual vectors are shown in the Figures \ref{fig:cegrit31} and \ref{fig:cegrit27}, respectively.For the correction equations (\ref{jds}) and (\ref{mds}), the harmonic Ritz values at the final iteration where the convergence occurred are shown in Figures \ref{fig:cegrit34} and \ref{fig:cegrit30}, respectively. In these Figures, we marked the exact eigenvalues of $A$ with squares. Dots represent the Harmonic Ritz values obtained at the iteration where convergence occurs. It is easy to observe from these two figures that the accurate approximation to larger number of eigenvalues are obtained using the correction equation (\ref{mds})when compared to the correction equation (\ref{jds}). 
 
In case of solving the correction equation (\ref{mjd}) using the Gaussian elimination, the eigenvalue approximations converges to an eigenvalue other than the desired one. When the correction equations (\ref{ojd}), (\ref{jds}) and (\ref{mjd}), (\ref{mds}) are solved approximately using $10$ steps of GMRES, the approximation converges to the eigenvalue $2.000+0.000i$, which is not the desired eigenvalue.

The next example shows that the new method also works for large sparse matrices.
\begin{example}\label{eg5}\relax
We consider the matrix $A$ as SHERMAN4, a sparse matrix of order $1104$, taken from Harwell-Boeing set of test matrices. The smallest eigenvalue $0.030726$ (accurate upto $5$ decimal places) is required. MATLAB command `eigs' produces the result as $ 3.072570776499973e-02$. All entries in the initial vector are equal to $1$. Rayleigh-Ritz projection and Refined Ritz vectors are used for approximating the eigen pairs.
\end{example}
\begin{table}[!htb]\label{table2}\relax
\caption{Comparison of JD and MJD methods using either Gaussian elimination solution or approximate solution of correction equations}
\begin{adjustbox}{width = 1\textwidth}
\begin{scriptsize}
\begin{tabular}{|c|c|c|c|c|}
\hline 
Equn &  method of solving  & iteration  & Ritz value & Norm of residual vector   \\ 
No.~~ & Linear system & Number & &\\
\hline 
\ref{ojd} & Gaussian elimination  &  10 & 3.072570776430865e-02 & 8.937205079499508e-11 \\ 
\hline 
\ref{mjd} & Gaussian elimination  &  11 & 3.072570776499898e-02 & 2.682680808082383e-14\\ 
\hline 
\ref{jds} & Gaussian elimination  &  5 &  3.072570776525444e-02 & 1.169743153032539e-12\\ 
\hline 
\ref{mds} & Gaussian elimination  &  11 & 3.072570776499969e-02 & 1.881587896753183e-14 \\ 
\hline 
\end{tabular}
\end{scriptsize}
\end{adjustbox}
\end{table}
In Table~1, we give  the numerical results of Jacobi-Davidson and MSJD method for the matrix Sherman4. Fast convergence is observed when Gaussian elimination is used for solving the correction equation (\ref{jds}) compared to using the other correction equations.  The comparison of convergence  for the correction equations (\ref{jds}) and (\ref{mds})  is done in Figures \ref{fig:cegrit35}-\ref{fig:cegrit36}.
\begin{figure}[!htb]
\begin{minipage}{0.4975\linewidth}
\begin{center} 
\includegraphics[width = 2in,height=2in]{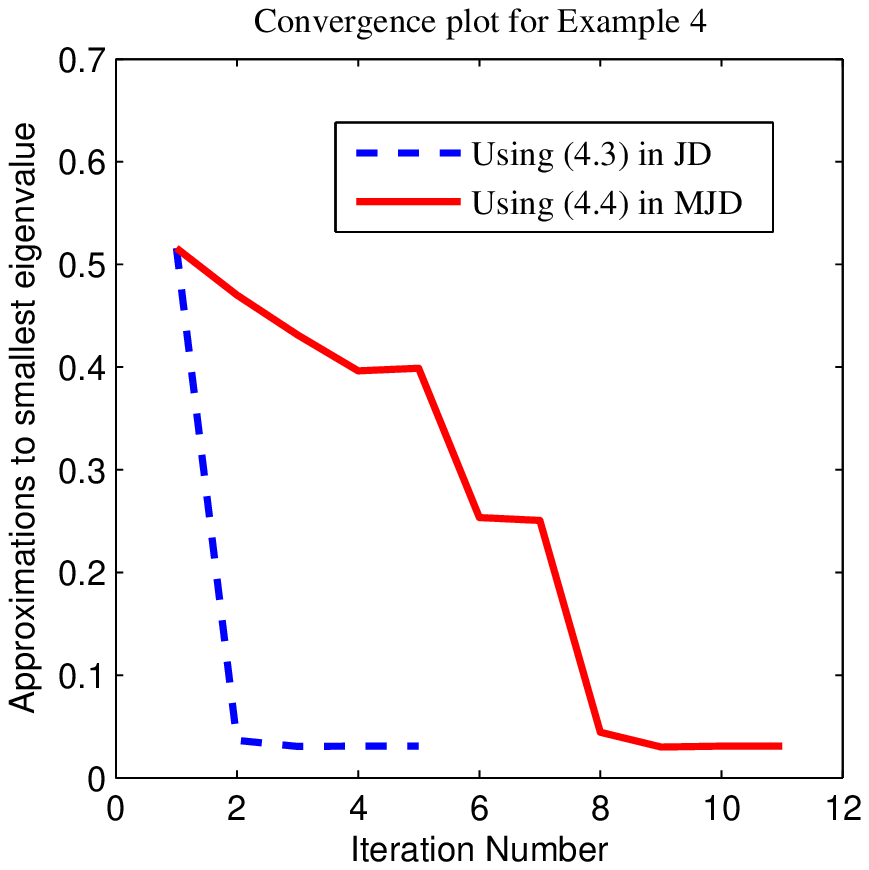}
\caption{\scriptsize \sl  Convergence of Ritz values with\\ correction equations (\ref{jds}) and (\ref{mds})} 
\label{fig:cegrit35} 
\end{center} 
\end{minipage}
\mbox{\hspace{0.5cm}}
\begin{minipage}{0.4975\linewidth}
\begin{center} 
\includegraphics[width = 2in,height=2in]{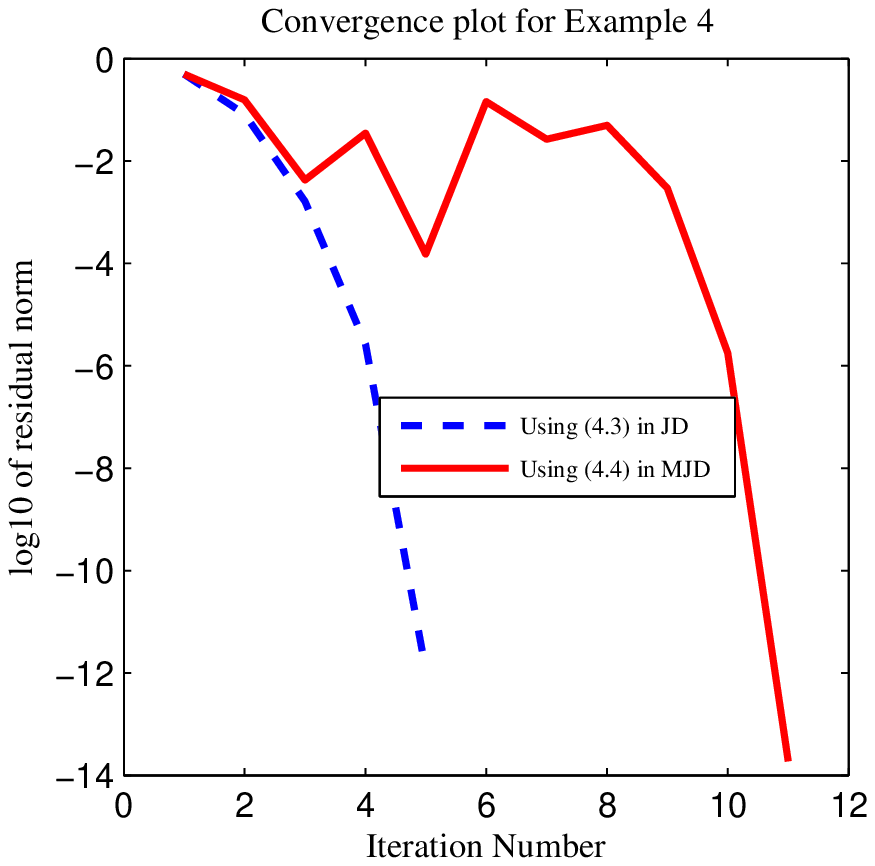}
\caption{\scriptsize \sl  Convergence of Residual norms with\\correction equations (\ref{jds}) and (\ref{mds})} 
\label{fig:cegrit36} 
\end{center} 
\end{minipage}
\end{figure}

\begin{figure}[!htb]
\begin{minipage}{0.4975\linewidth}
\begin{center} 
\includegraphics[width = 2in,height=2in]{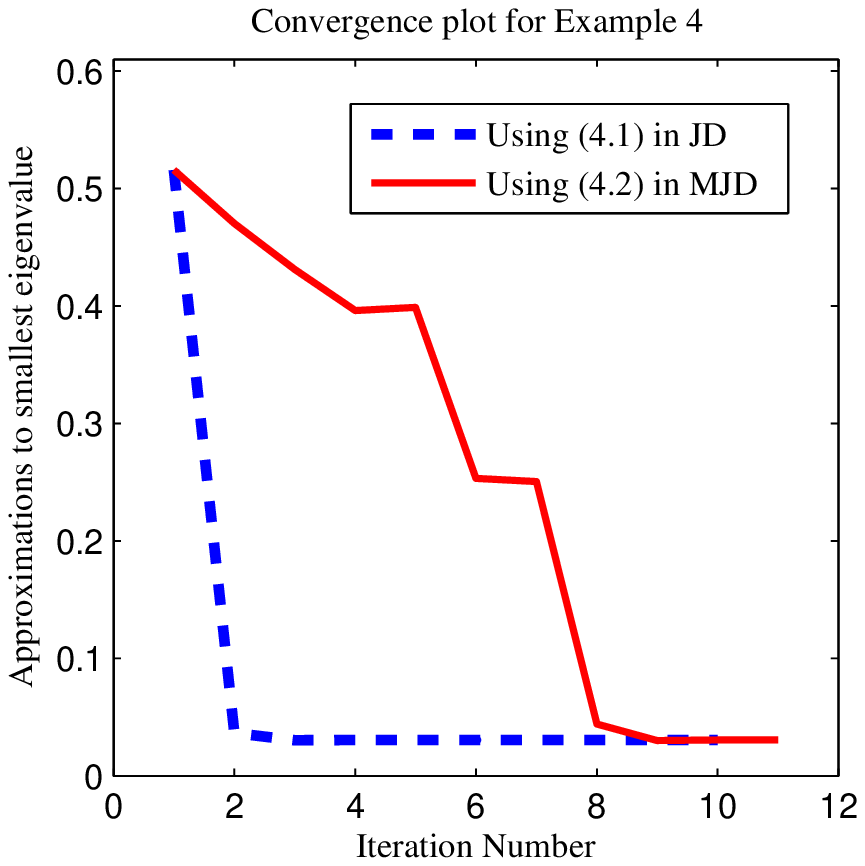}
\caption{\scriptsize \sl  Convergence of Ritz values with\\ correction equations (\ref{ojd}) and (\ref{mjd})} 
\label{fig:cegrit37} 
\end{center} 
\end{minipage}
\mbox{\hspace{0.5cm}}
\begin{minipage}{0.4975\linewidth}
\begin{center} 
\includegraphics[width = 2in,height=2in]{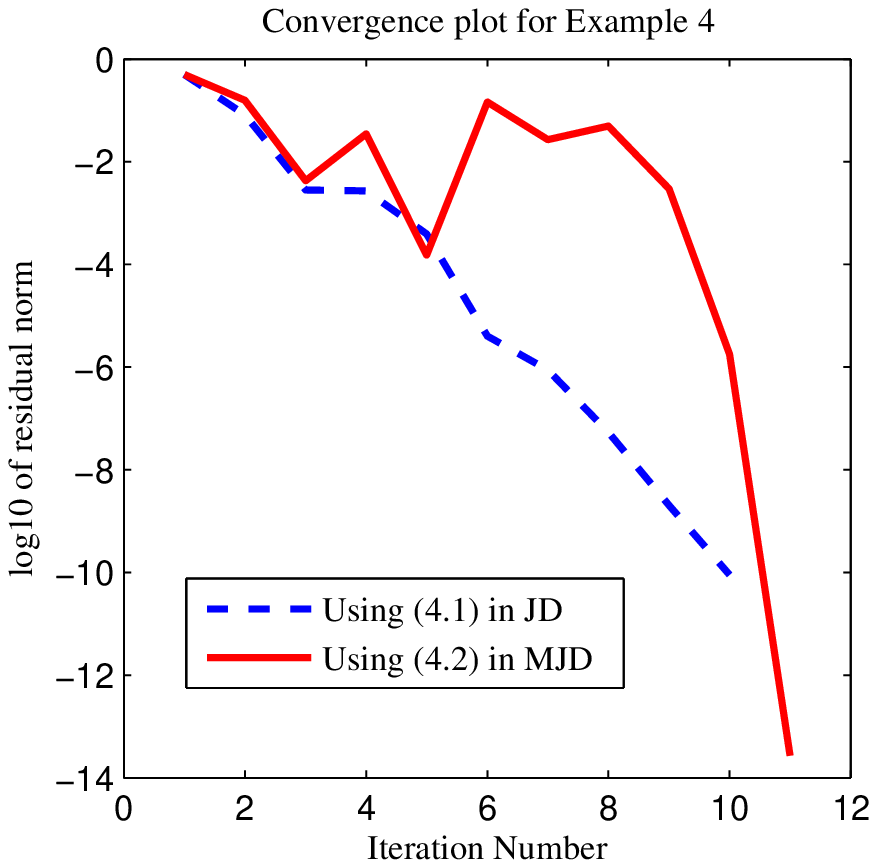}
\caption{\scriptsize \sl  Convergence of Residual norms with\\ correction equations (\ref{ojd}) and (\ref{mjd})} 
\label{fig:cegrit38} 
\end{center} 
\end{minipage}
\end{figure}

With Gaussian elimination for solving the correction equation (\ref{mjd}), the convergence occurs at the iteration number $11,$ whereas with the correction equation (\ref{ojd}), the convergence is reached at the iteration number $10.$ In both the approaches, the obtained eigenvalue approximation is  accurate upto $10$ decimal places. The Comparison of convergence of eigenvalue approximations and norms of residual vectors in these two cases is done in Figures \ref{fig:cegrit37} and \ref{fig:cegrit38}, respectively. 
We observe that with Gaussian elimination for solving the correction equations in the Jacobi-Davidson and the new method, the results are on par

\section{With restarting}
It is well known that for symmetric matrices, Rayleigh-Ritz projection over large subspaces may give good approximations to an eigen pair. But as the size of a subspace increases, the cost associated with computing an eigen pair  also increases. Further, if the size of the given matrix is very large, the space complexity in the computation may become  practically unmanageable. For this reason, the method with restarting is favourable. In the following examples, we check the performance of the MSJD method with restarting. 
\begin{example}\label{eg1}\relax
Consider the matrix $A$ as the first Example in \cite{slei}, which is a diagonally dominant tridiagonal matrix of order $200$ with diagonal elements $a_{i,i} = 2.4+i/2$  for $ i < 200$, $a_{200,200} = 2.4+200/1.5,$ and with each entry on the super-diagonal and sub-diagonal as $1$. We take the initial vector $v1 = (0.03,0.03,....0.03,1)^{\ast}$ as in \cite{slei}. The corresponding Rayleigh quotient with respect to $A$ is $1.632770531196111e+02$. Our goal is to approximate the largest eigenvalue. Such a preliminary approximation is obtained by using Matlab command $\textit{`eig'}$, which computes it as $2.561474561181774e+02$.
\end{example}

We compare the performance of the proposed method with the Jacobi-Davidson method by restarting the algorithm after the size of subspace becomes $3$.  Table~2 shows a summary of numerical results. 

\begin{table}[!htb]\label{t1}\relax
\caption{Using Gaussian elimination method to solve correction equation for the matrix in Example 5}
\begin{adjustbox}{width = 1\textwidth}
\begin{scriptsize}
\begin{tabular}{|c|c|c|c|c|}
\hline 
Equn &  method of solving  & Restart  & Ritz value & Norm of residual vector   \\ 
No.~~ &  Linear system  &Number & &\\
\hline 
\ref{ojd} &  Gaussian elimination  &  2  & 2.561474561181777e+02 &  7.503060878161262e-12\\ 
\hline 
\ref{mjd} &  Gaussian elimination  &  2  & 2.561474561181780e+02 & 6.311243610822153e-13\\ 
\hline
\ref{jds} &  Gaussian elimination  &  2 &  2.561474561182365e+02 &5.862284941673252e-11\\ 
\hline  
\ref{mds} &  Gaussian elimination  &  2 &  2.561474561181781e+02 & 3.095655387252406e-13 \\ 
\hline
\end{tabular}
\end{scriptsize}
\end{adjustbox}
\end{table} 

Using $5$ steps of GMRES to approximate the solution of the correction equation~(\ref{mjd}) in MJD method gives an eigenvalue approximation near the desired eigenvalue. The Ritz values converge to a spurious eigenvalue from $5^{th}$ restart onwards, but norm of the residual vectors reach the tolerance. The same behaviour is observed with its theoretically equivalent correction  equation~(\ref{mds}). With correction equations~(\ref{ojd}) and (\ref{jds}) in Jacobi-Davidson method, from $1^{st}$ restart onwards, the Ritz values stagnated near the desired eigenvalue. Convergence of Ritz values using correction equations (\ref{ojd}) and (\ref{mjd}) are shown in Figure~\ref{fig:fig7N}  . For correction equations (\ref{jds}) and (\ref{mds}), they are shown in Figure~\ref{fig:24eg1}. The dependence of $\log_{10}$ of norms of corresponding residual vectors versus restart numbers are shown in Figures~\ref{fig:8N} and \ref{fig:2dec24}.
\begin{figure}[!htb]
\begin{minipage}{0.4975\linewidth}
\begin{center} 
\includegraphics[width = 2in,height=2in]{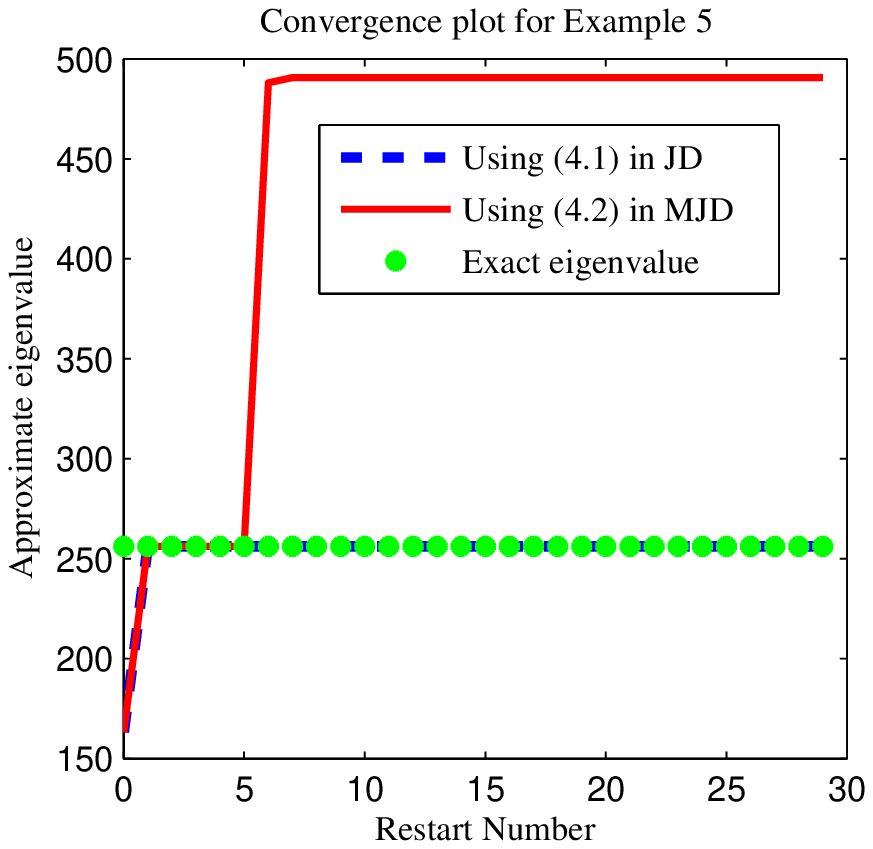}
\caption{\scriptsize \sl Restart number Vs Ritz values using \\correction equations (\ref{ojd}) and (\ref{mjd}) with \\approximate solution for subspace size $3$} 
\label{fig:fig7N} 
\end{center} 
\end{minipage}
\mbox{\hspace{0.5cm}}
\begin{minipage}{0.4975\linewidth}
\begin{center} 
\includegraphics[width = 2in,height=2in]{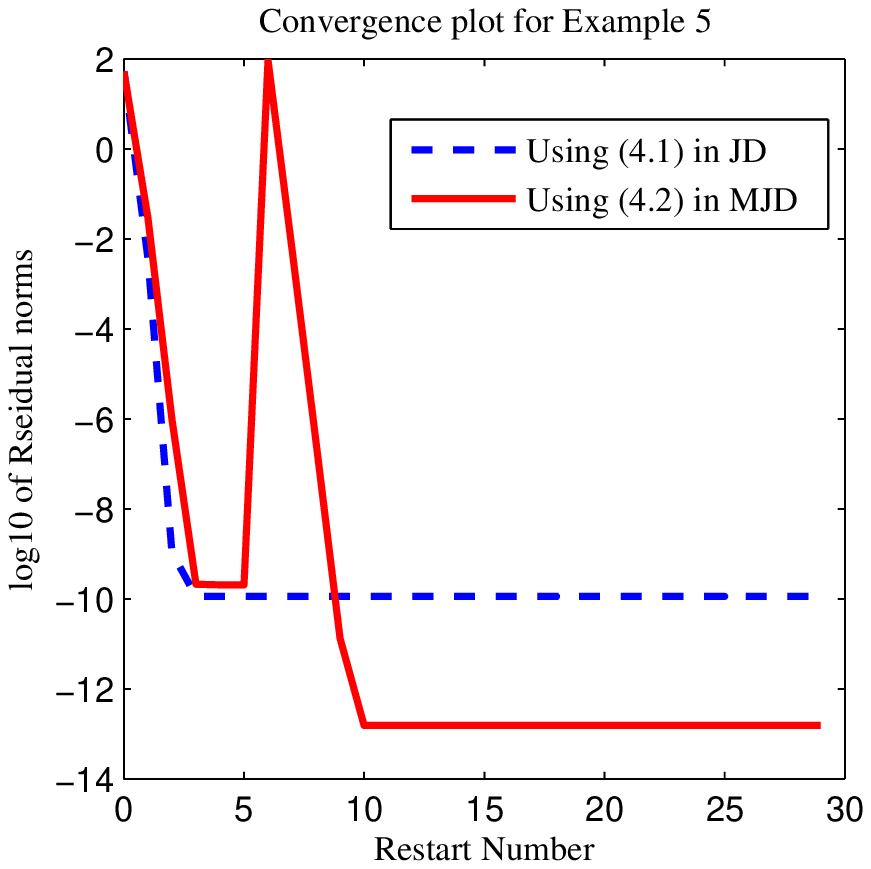}
\caption{\scriptsize \sl Restart number Vs $\log10(||res||_2)$ \\using correction equations (\ref{ojd}) and (\ref{mjd} ) \\with approximate solution for subspace size $3$} 
\label{fig:8N} 
\end{center} 
\end{minipage}
\end{figure}
\vs{-0.2cm}
\begin{figure}[!htb]
\begin{minipage}{0.4975\linewidth}
\begin{center} 
\includegraphics[width = 2in,height=2in]{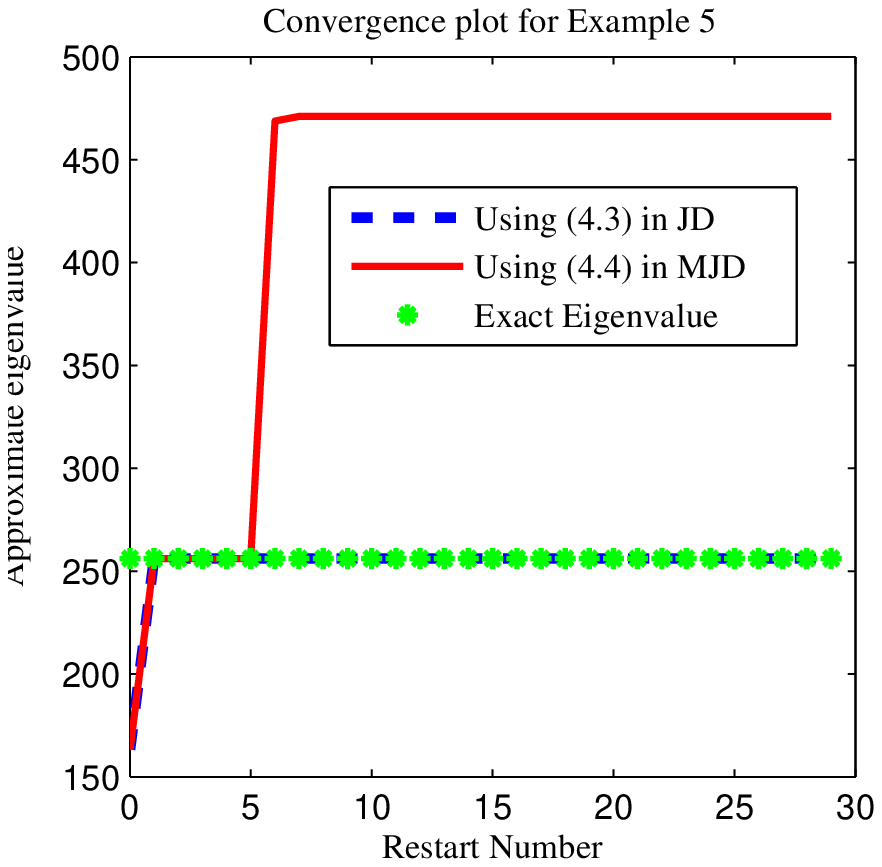}
\caption{\scriptsize \sl Restart number Vs Ritz values using \\correction equations (\ref{jds}) and (\ref{mds}) with \\approximate solution for subspace size $3$} 
\label{fig:24eg1} 
\end{center} 
\end{minipage}
\mbox{\hspace{0.5cm}}
\begin{minipage}{0.4975\linewidth}
\begin{center} 
\includegraphics[width = 2in,height=2in]{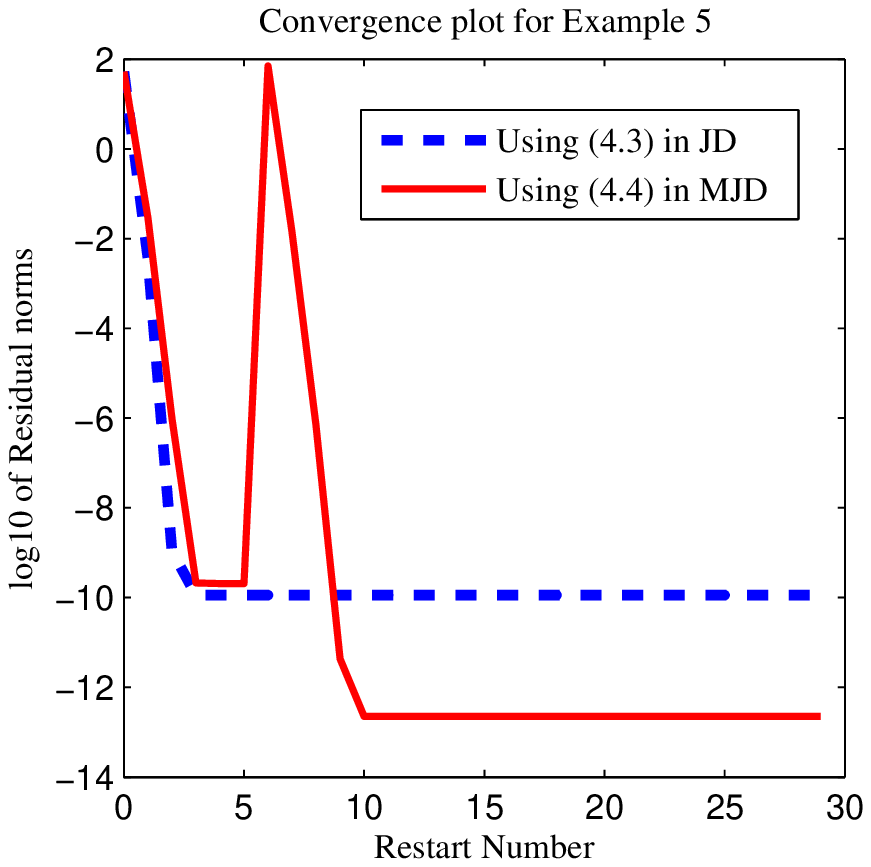}
\caption{\scriptsize \sl Restart number Vs $\log10(||res||_2)$ \\using correction equations (\ref{jds}) and (\ref{mds}) \\with approximate solution for subspace size $3$} 
\label{fig:2dec24} 
\end{center} 
\end{minipage}
\end{figure}

We also checked the performance of the proposed method by restarting the algorithm when the size of the subspace for extracting an eigen pair reached $4.$ The approximate solution of correction equations is obtained after $5$ steps of GMRES. Using the correction equation (\ref{ojd}), the eigenvalue approximation $ 2.561474561181778e+02$ is obtained at first restart, that is, without restart. After $4^{th}$ restart, it starts giving spurious eigenvalues whereas with the  correction equation (\ref{mjd}) the approximate eigenvalue obtained at $2^{nd}$ restart is found to be $2.561474561181784e+02$. Comparison of convergence of approximate eigenvalues in these cases is done in Figure~\ref{fig:fig4N}.Figure~\ref{fig:6N} reports the $\log_{10}$ of residual norms. 
\begin{figure}[!htb]
\begin{minipage}{0.4975\linewidth}
\begin{center} 
\includegraphics[width = 2in,height=2in]{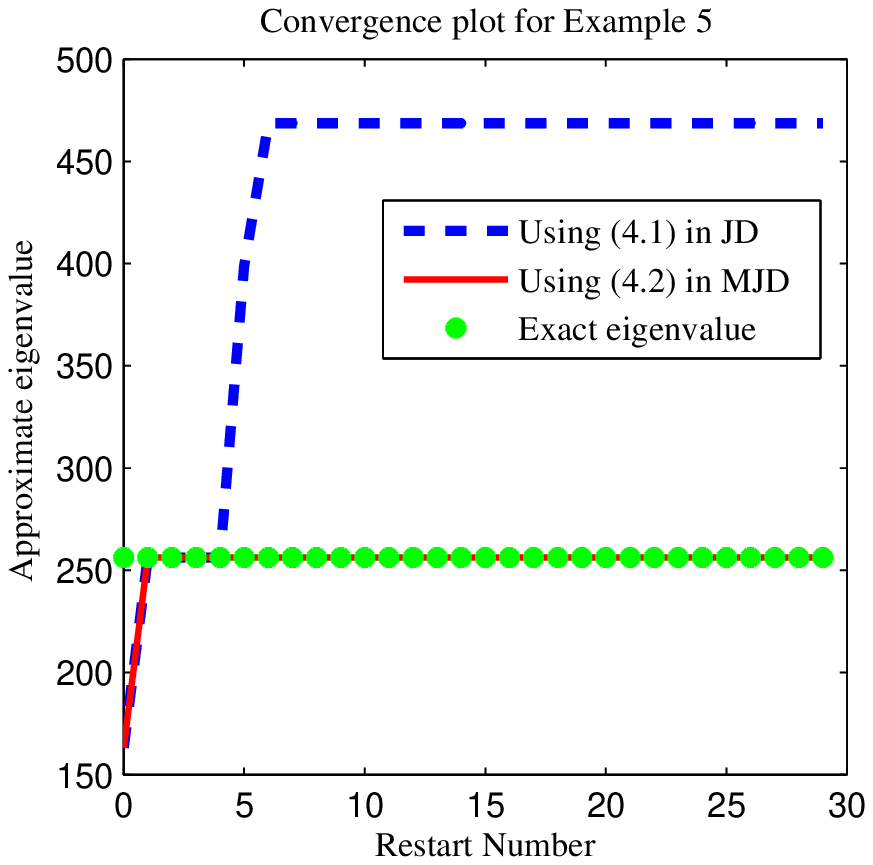}
\caption{\scriptsize \sl Restart number Vs Ritz values using \\correction equations (\ref{ojd}) and (\ref{mjd}) with \\approximate solution for subspace size $4$} 
\label{fig:fig4N} 
\end{center} 
\end{minipage}
\mbox{\hspace{0.5cm}}
\begin{minipage}{0.4975\linewidth}
\begin{center} 
\includegraphics[width = 2in,height=2in]{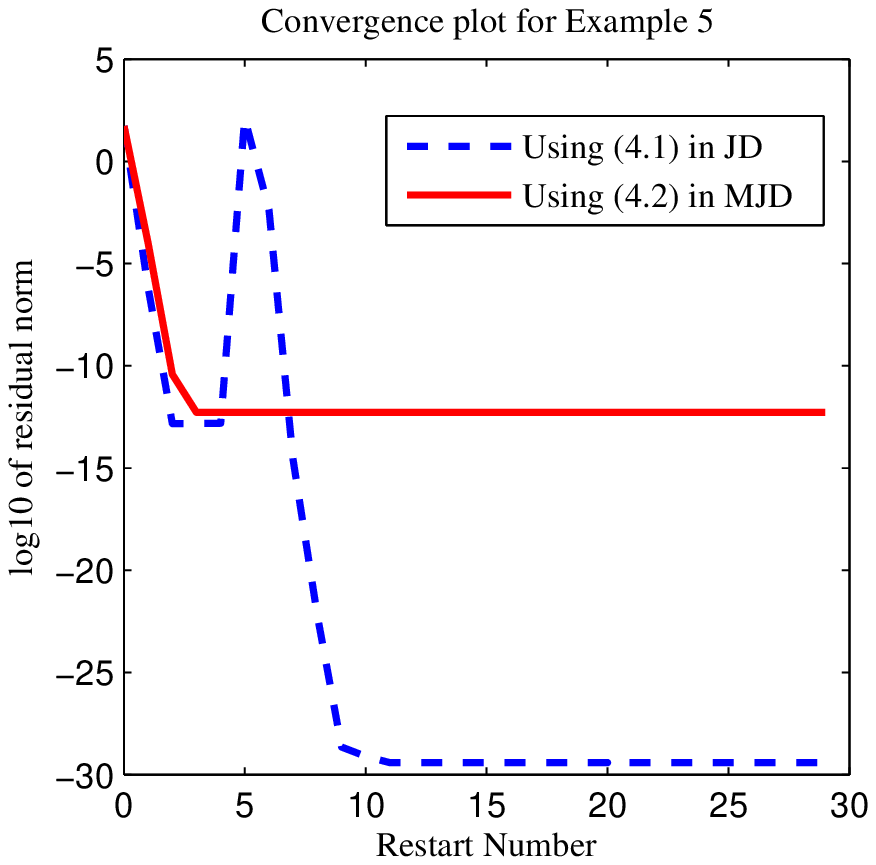}
\caption{\scriptsize\sl Restart number Vs $\log10(||res||_2)$ \\using correction equations (\ref{ojd}) and (\ref{mjd}) \\with approximate solution for subspace size $4$} 
\label{fig:6N} 
\end{center} 
\end{minipage}
\end{figure}

In a similar vein, using $5$ steps of GMRES for solving the correction equation~(\ref{jds}) and with subspace size $4,$ the approximation to the desired eigenvalue is obtained without restart, that is, when size of the subspace reaches $4.$ From $5^{th}$ restart, it starts giving a spurious eigenvalue. Whereas with the correction equation~(\ref{mds}), norm of residuals reached the tolerance at $2^{nd}$ restart and the approximation to eigenvalue is $2.561474561181783e+02.$ Results of comparison of convergence of Ritz values and norms of residuals with correction equations (\ref{jds}) and (\ref{mds}) are shown in Figure~\ref{fig:fig3N} and Figure~\ref{fig:5N}, respectively.
\begin{figure}[!htb]
\begin{minipage}{0.4975\linewidth}
\begin{center} 
\includegraphics[width = 2in,height=2in]{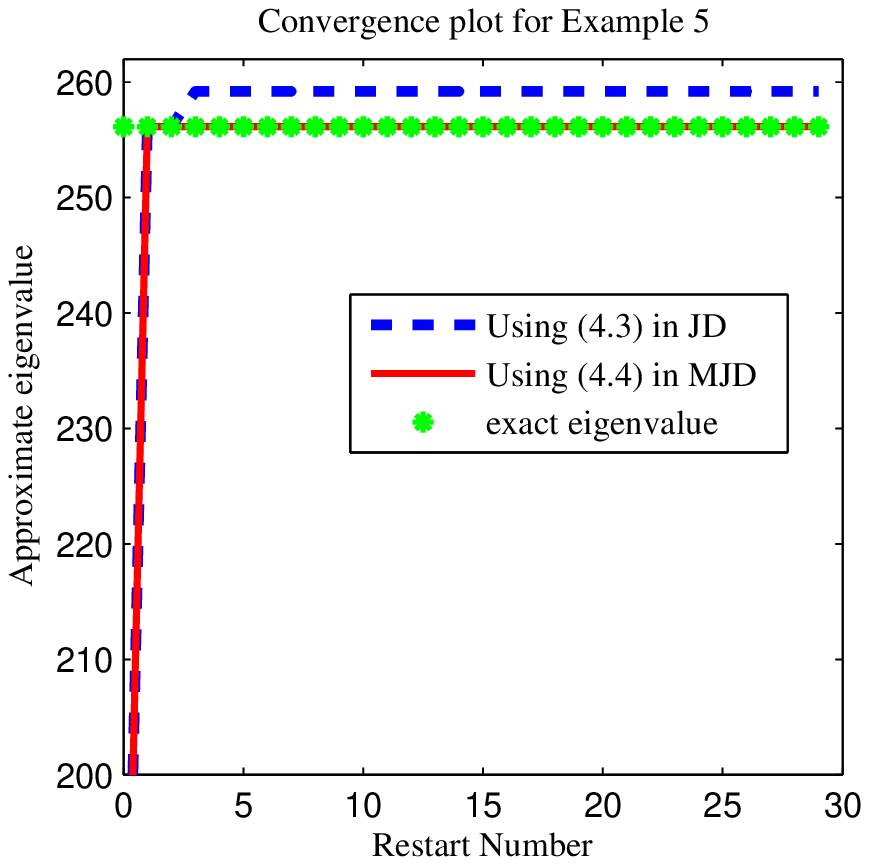}
\caption{\scriptsize \sl Restart number Vs Ritz values using \\correction equations (\ref{jds}) and (\ref{mds}) with \\approximate solution for subspace size $4$} 
\label{fig:fig3N} 
\end{center} 
\end{minipage}
\mbox{\hspace{0.5cm}}
\begin{minipage}{0.4975\linewidth}
\begin{center} 
\includegraphics[width = 2in,height=2in]{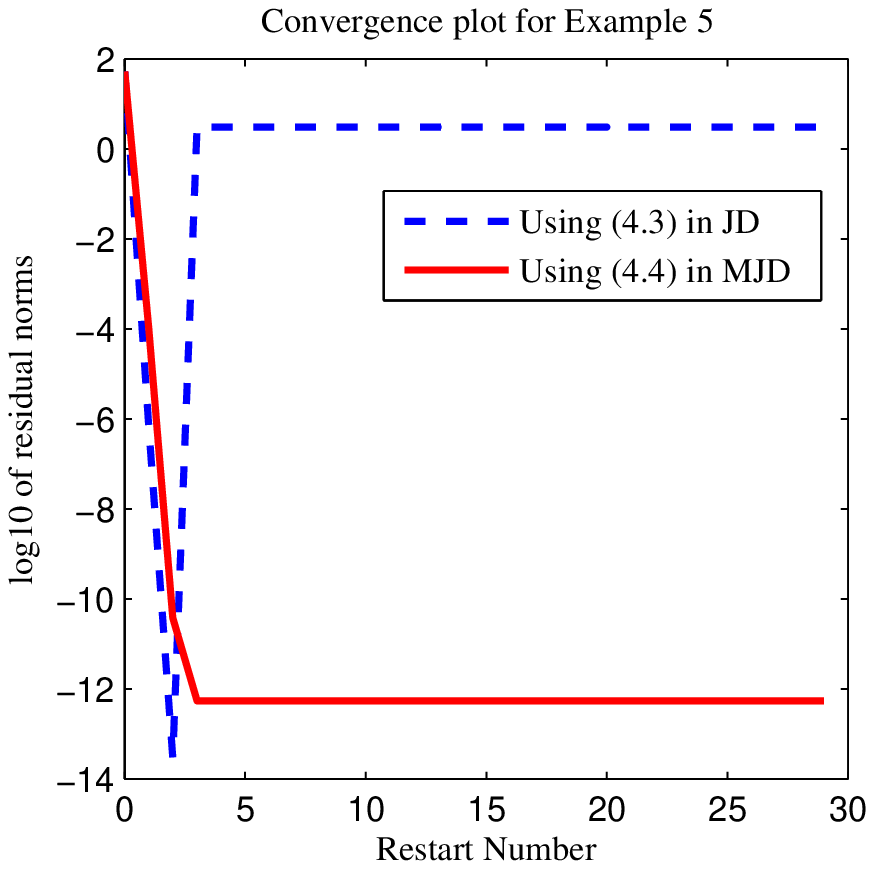}
\caption{\scriptsize \sl Restart number Vs $\log10(||res||_2)$ \\using correction equations (\ref{jds}) and (\ref{mds}) \\with approximate solution for subspace size $4$} 
\label{fig:5N} 
\end{center} 
\end{minipage}
\end{figure}
\begin{example}\label{eg2}\relax
Let the matrix $A =QTQ,$ where $T$ is a tridiagonal matrix of order $100$ with super-diagonal and sub-diagonal elements as $-1,$ and diagonal elements $1$, and $Q$ is the 
 Householder transformation of order $100$  with Householder vector $h$ having entries $h_i = \sqrt{i+.45}$ for $i = 1,2,....100$. The matrix $A$ is not diagonally dominant. We take each entry in the Initial vector as $1$. The largest eigenvalue is required. Matlab command `eig' produces an approximation to the largest eigenvalue as $ 3.999032564583972e+00$. See Example 2 in \cite{slei}.
\end{example}

We first computed eigenvalue approximations by applying Rayleigh-Ritz projection over subspaces of dimension ranging from $3$ to $20$. Using Gaussian elimination for solving the correction equation (\ref{jds}), a good approximation to the desired eigenvalue is obtained at $ 8^{th}$ restart for the subspace of dimension $15,$ where the Ritz value and residual norm are $3.999032566618813e+00$ and $2.034836893782876e-09,$ respectively.  The same accuracy to the desired eigenvalue is also obtained for the subspace of dimension $15$, when the correction equation (\ref{mds}) is  solved using Gaussian elimination. Figures ~\ref{fig:fig9N} and \ref{fig:10N}, respectively, show the comparison of Ritz values and residual norms in these two cases. \vs{2cm}

\begin{figure}[!htb]
\begin{minipage}{0.4975\linewidth}
\begin{center} 
\includegraphics[width = 2in,height=2in]{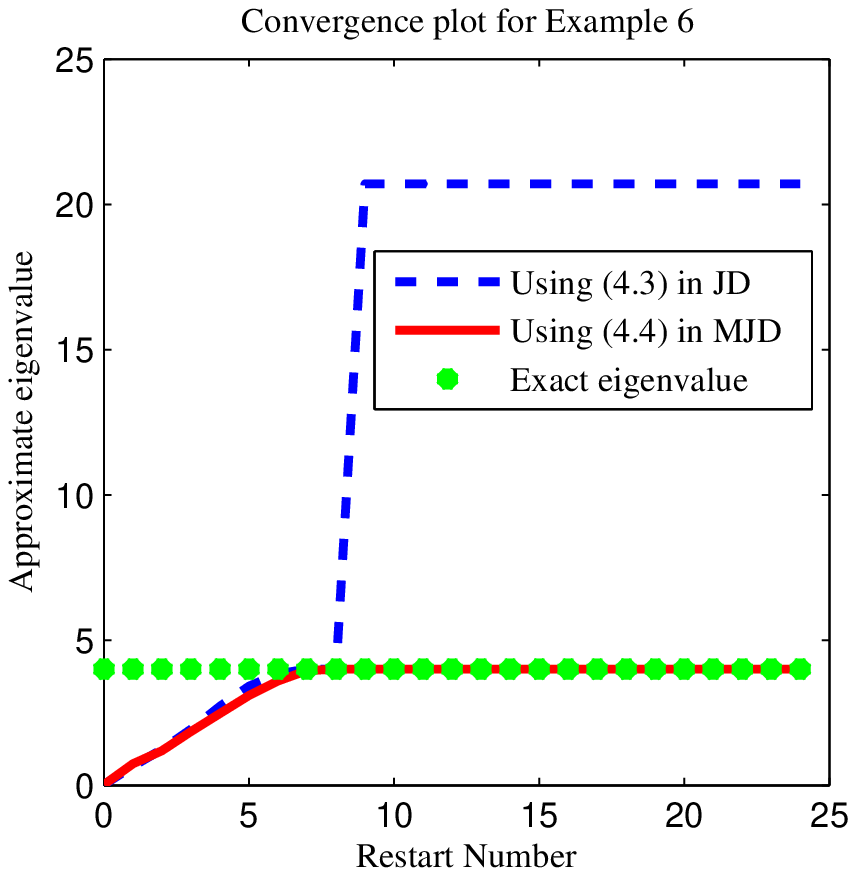}
\caption{\scriptsize \sl Restart number Vs Ritz values using \\correction equations (\ref{jds}) and (\ref{mds}) with Gaus-\\sian elimination solution for subspace size $15$} 
\label{fig:fig9N} 
\end{center} 
\end{minipage}
\mbox{\hspace{0.5cm}}
\begin{minipage}{0.4975\linewidth}
\begin{center} 
\includegraphics[width = 2in,height=2in]{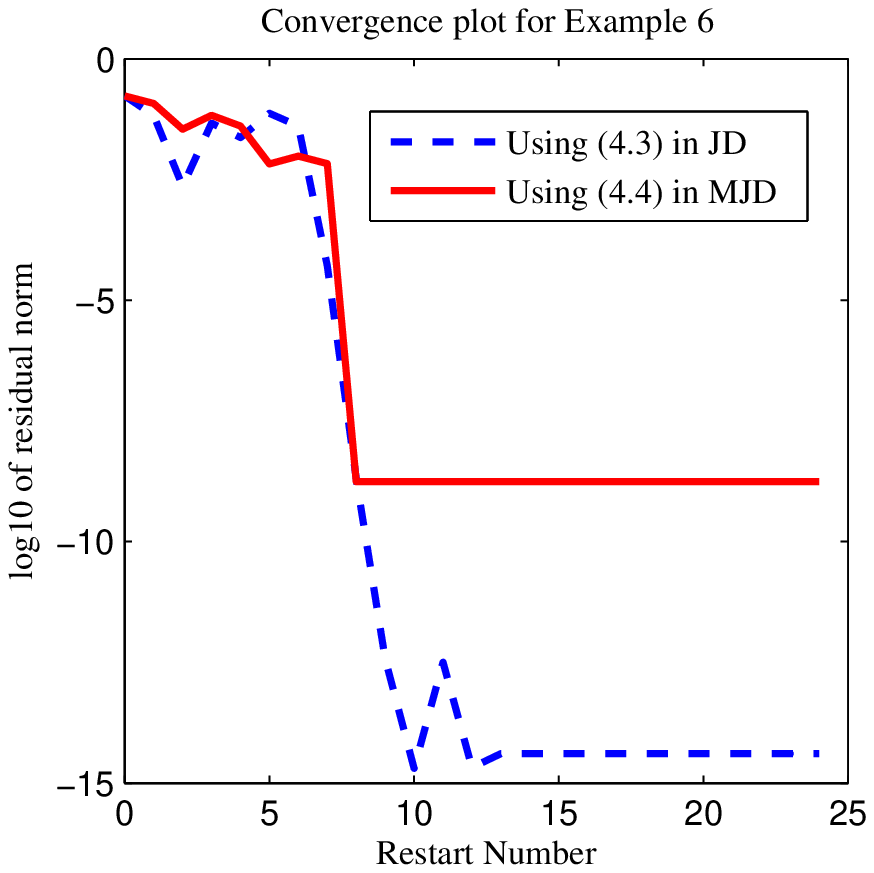}
\caption{\scriptsize Restart number Vs $\log10(||res||_2)$ \\using correction equations (\ref{jds}) and (\ref{mds}) with \\Gaussian elimination solution for subspace size $15$} 
\label{fig:10N} 
\end{center} 
\end{minipage}
\end{figure}
For subspaces of dimensions $11$ and $14$ in Rayleigh-Ritz projection, an accuracy upto machine precision is obtained for the desired eigenvalue at $13^{th}$ and $10^{th}$ iterations, respectively. The results are shown in Figure~\ref{fig:fig11N} and the convergence of residual norms associated with this is shown in Figure \ref{fig:12N}. But, eigenvalue approximations obtained using the correction equation (\ref{jds}) are not accurate upto machine precision for a subspace of any size.

\begin{figure}[!htb]
\begin{minipage}{0.4975\linewidth}
\begin{center} 
\includegraphics[width = 2in,height=2in]{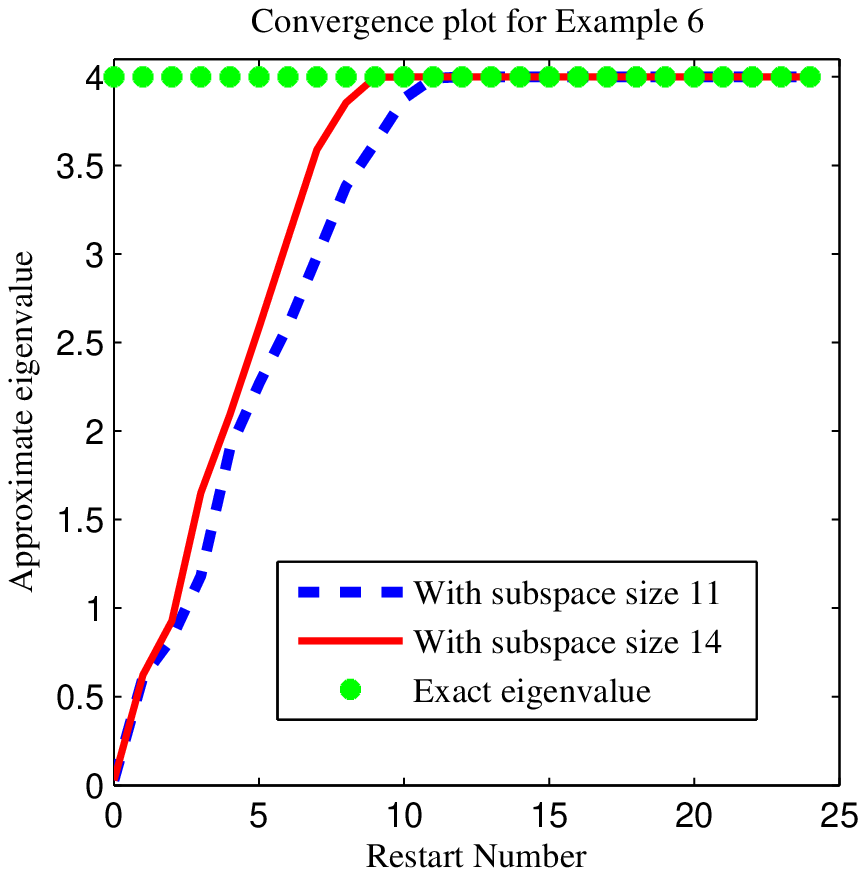}
\caption{\scriptsize \sl Restart number Vs Ritz values using \\correction equation(\ref{mds}) with Gaussian elimination \\solution for subspace sizes $11$ and $14$} 
\label{fig:fig11N} 
\end{center} 
\end{minipage}
\mbox{\hspace{0.5cm}}
\begin{minipage}{0.4975\linewidth}
\begin{center} 
\includegraphics[width = 2in,height=2in]{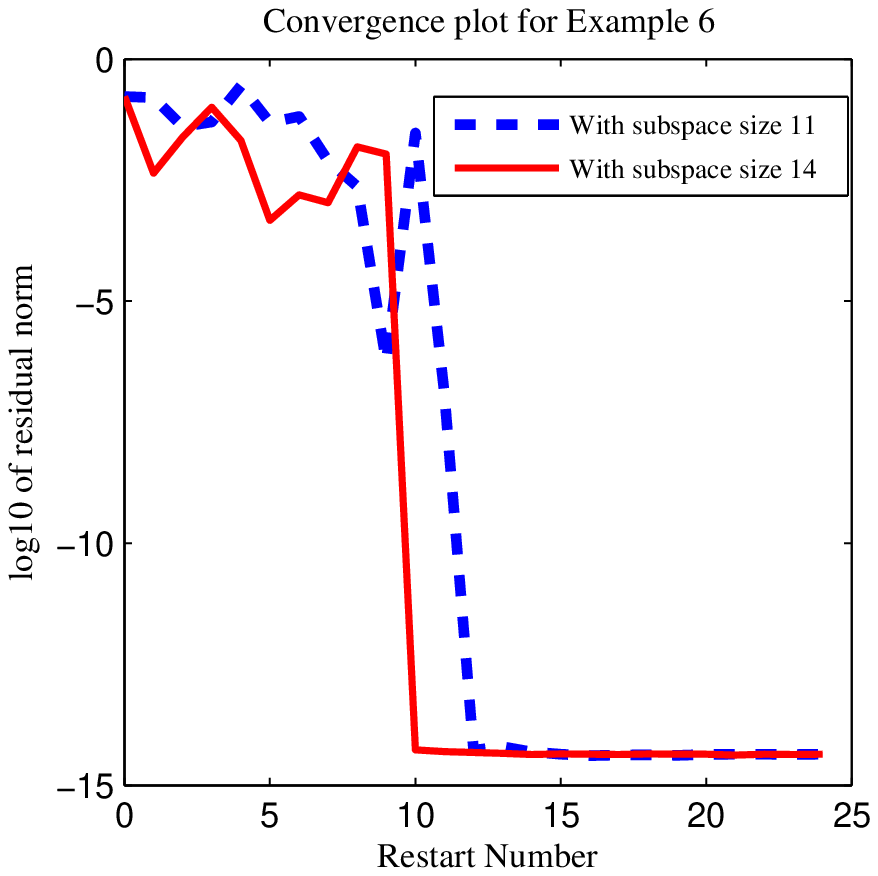}
\caption{\scriptsize \sl Restart number Vs $\log10(||res||_2)$ \\using correction equation(\ref{mds}) with Gaussian \\elimination solution for subspace size $11$ and $14$} 
\label{fig:12N} 
\end{center} 
\end{minipage}
\end{figure}

\begin{figure}[!htb]
\begin{minipage}{0.4975\linewidth}
\begin{center} 
\includegraphics[width = 2in,height=2in]{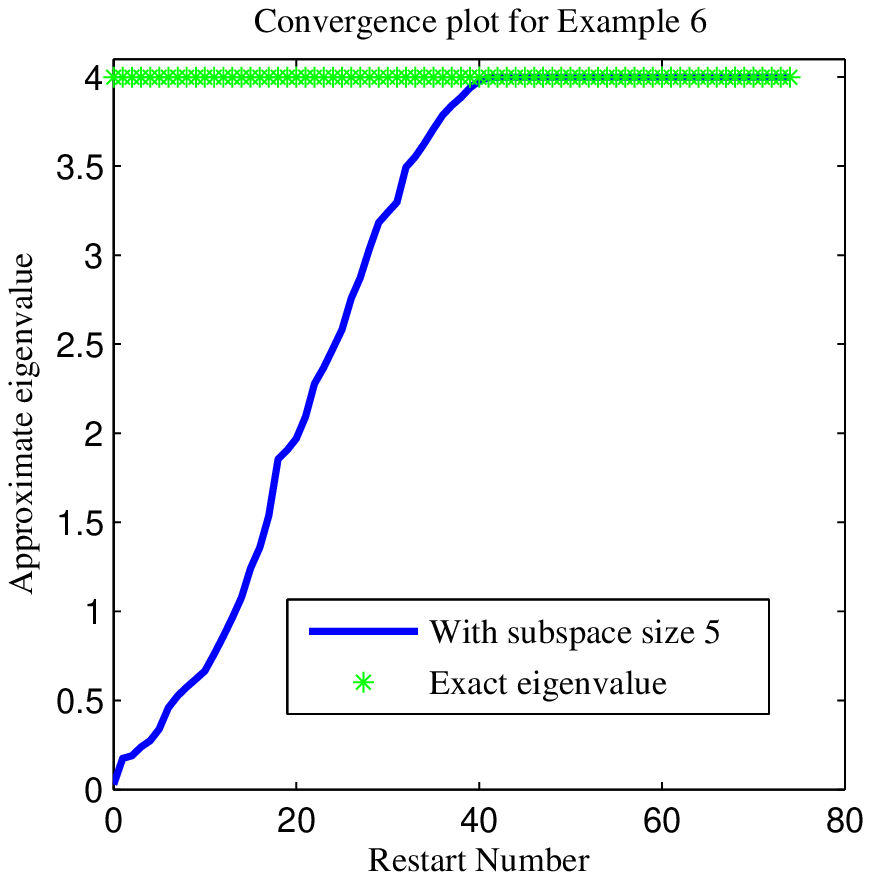}
\caption{\scriptsize \sl Restart number Vs Ritz values using \\correction equation(\ref{mds}) with Gaussian elimination \\solution for subspace size $5$} 
\label{fig:fig13N} 
\end{center} 
\end{minipage}
\mbox{\hspace{0.5cm}}
\begin{minipage}{0.4975\linewidth}
\begin{center} 
\includegraphics[width = 2in,height=2in]{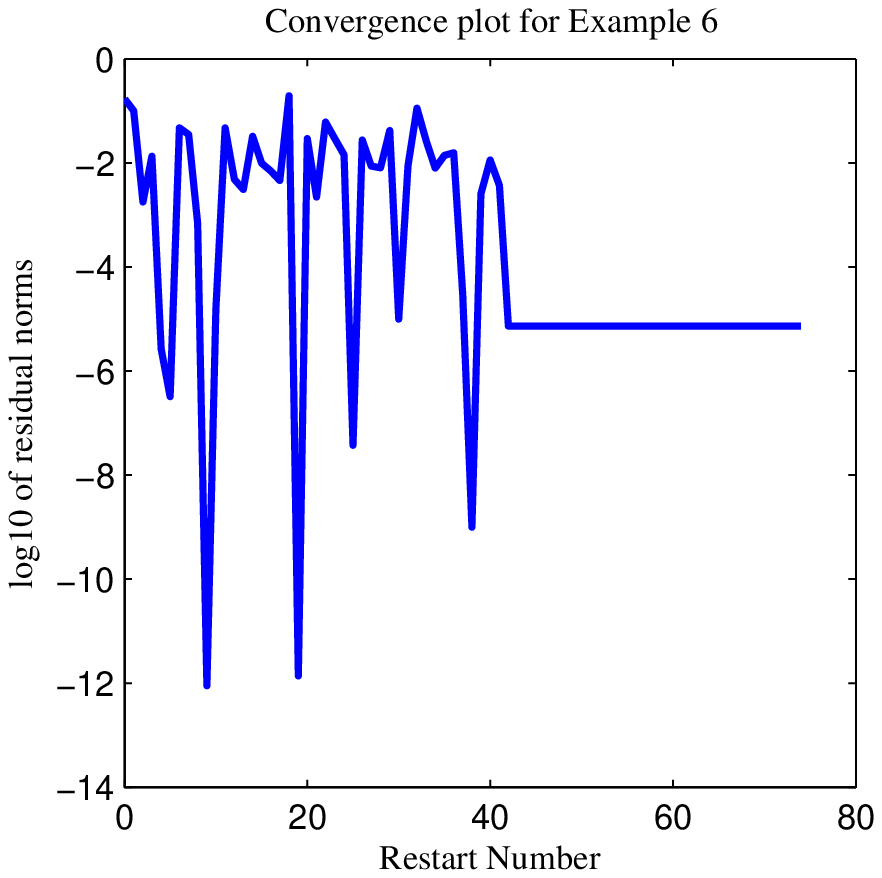}
\caption{\scriptsize \sl Restart number Vs $\log10(||res||_2)$ \\using correction equation(\ref{mds} ) with Gaussian \\elimination solution for subspace size $5$ } 
\label{fig:14N} 
\end{center} 
\end{minipage}
\end{figure}

As mentioned earlier, the stagnation phenomenon occurs with the correction equation (\ref{mds}) when the subspace is of size  $5,$ from $45^{th}$ iteration onwards with Ritz value $ \theta = 3.999039854574475e+00.$ This is accurate upto $5$ decimal places with the residual norm as $7.289990498570303e-06  $. In this case $$\|\big(((A-\theta I)^\ast (A-\theta I)-\|residual\|^2 I\big)x\| =  5.314441250548710e-11$$ where $x$ is a refined Ritz vector. A right singular vector of $A-\theta I$  corresponding to the singular value $(7.289990498570303e-06)^2$ is obtained. Figures \ref{fig:fig13N} and \ref{fig:14N} show the convergence of Ritz values and residual norms.
\begin{figure}[!htb]
\begin{minipage}{0.4975\linewidth}
\begin{center} 
\includegraphics[width = 2in,height=2in]{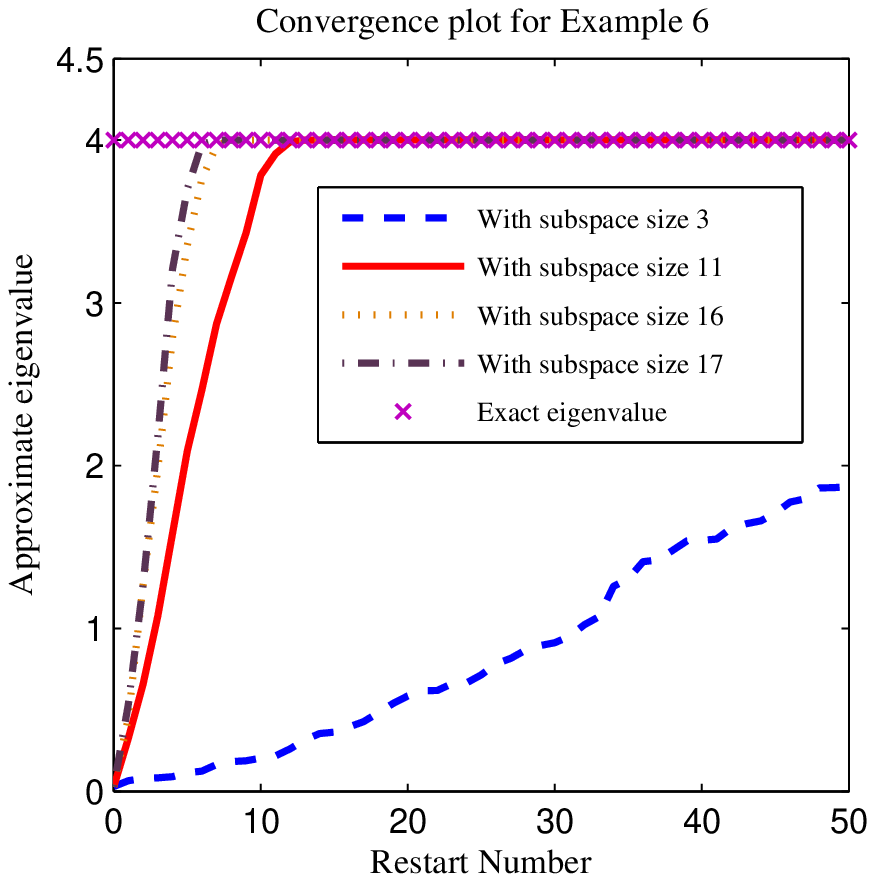}
\caption{\scriptsize \sl Restart number Vs Ritz values using \\correction equation(\ref{ojd}) with Gaussian elimination \\solution for subspace sizes $3$,$11$,$16$ and $17$} 
\label{fig:fig15N} 
\end{center} 
\end{minipage}
\mbox{\hspace{0.5cm}}
\begin{minipage}{0.4975\linewidth}
\begin{center} 
\includegraphics[width = 2in,height=2in]{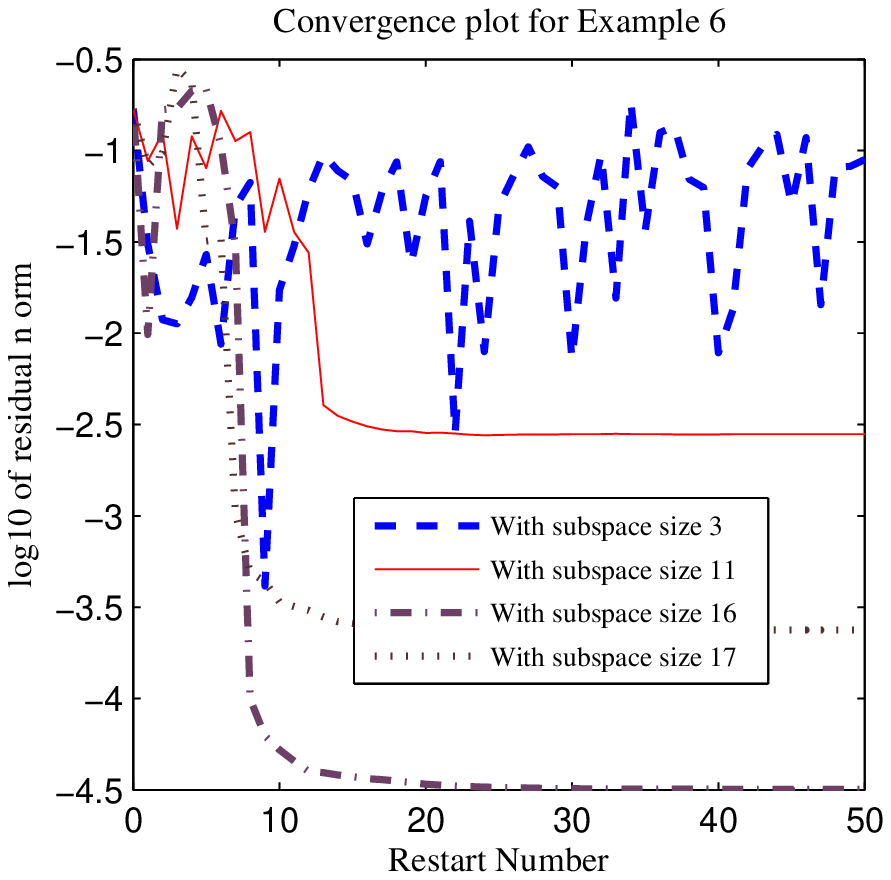}
\caption{\scriptsize \sl Restart number Vs $\log10(||res||_2)$ \\using correction equation(\ref{ojd} ) with Gaussian \\elimination solution for subspace size $3$,$11$,$16$ and $17$} 
\label{fig:16N} 
\end{center} 
\end{minipage}
\end{figure}

Matrices in the correction equation (\ref{ojd}) are found to be ill-conditioned during computation, when exact solutions are required except for subspaces of sizes $3,11,16$ and $17.$ In these exceptional cases, the Ritz values are found to be accurate up to $4$ decimal places. Comparison of convergence behaviour of Ritz values and residual norms  are shown in Figures \ref{fig:fig15N}-\ref{fig:16N}, for subspaces of sizes $3,11,16$ and $17$.

Using Gaussian elimination for solving the correction equation (\ref{mjd}), the exact eigenvalue is obtained at $16^{th}$ iteration, when the subspace of size $10$ is used in the Rayleigh-Ritz projection. Matrices in the correction equation (\ref{mjd}) become almost singular only for subspaces of size $6$ and $11$. Convergence of Ritz values and residual norms with various subspace sizes, using the correction equation (\ref{mjd}) are shown in Figures \ref{fig:fig17N}-\ref{fig:18N}. 

\begin{figure}[!htb]
\begin{minipage}{0.4975\linewidth}
\begin{center} 
\includegraphics[width = 2in,height=2in]{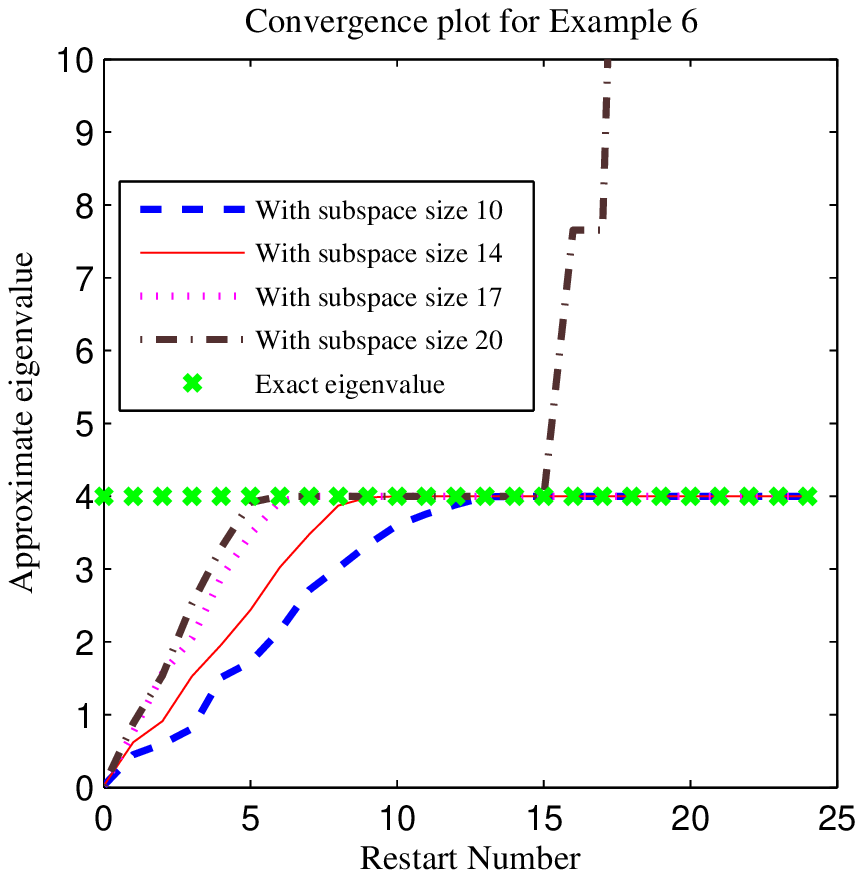}
\caption{\scriptsize \sl Restart number Vs Ritz values using \\correction equation(\ref{mjd}) with Gaussian elimination \\solution for subspace sizes $10,14, 17$ and $20$} 
\label{fig:fig17N} 
\end{center} 
\end{minipage}
\mbox{\hspace{0.5cm}}
\begin{minipage}{0.4975\linewidth}
\begin{center} 
\includegraphics[width = 2in,height=2in]{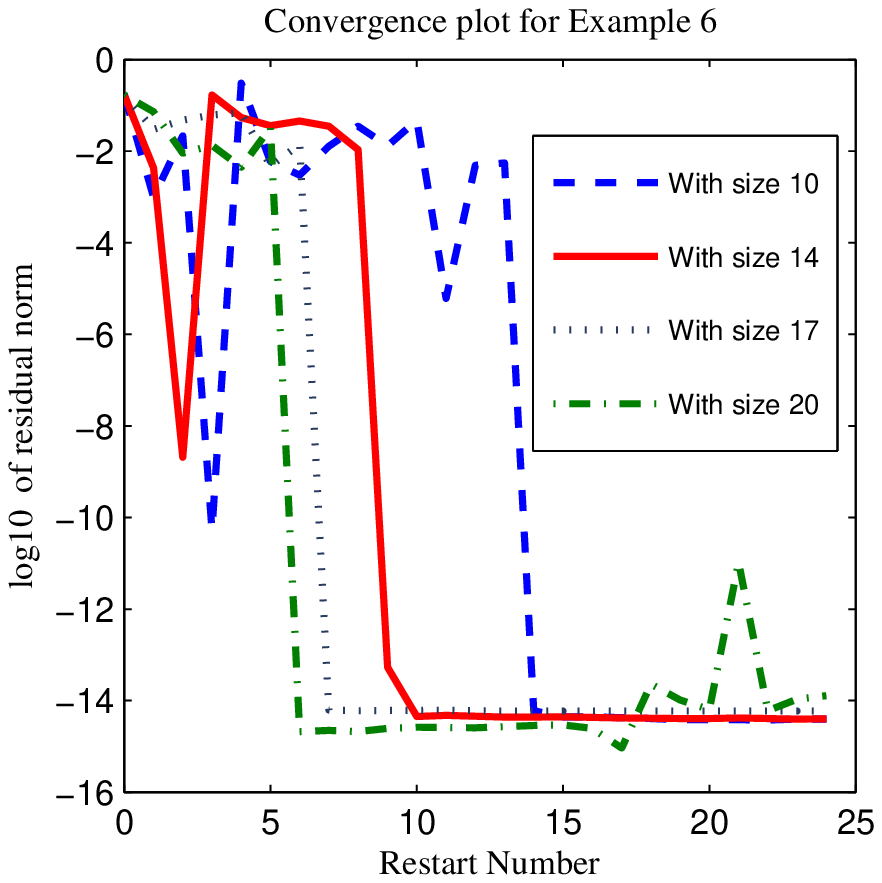}
\caption{\scriptsize \sl Restart number Vs $\log10(||res||_2)$ \\using correction equation(\ref{mjd} ) with Gaussian \\elimination solution for subspace size $10,14, 17$ and $20$} 
\label{fig:18N} 
\end{center} 
\end{minipage}
\end{figure}

When an approximate solution of Equations (\ref{ojd}), (\ref{jds}) and (\ref{mjd}), (\ref{mds}) are obtained using $5$ steps of GMRES method, an eigenvalue approximation obtained is found to be accurate upto $4$ decimal places. It may be explained as follows. A possible explanation is that due to the correction equations in Jacobi Davidson method, the difference between Ritz values in two consecutive iterations is very high whereas with new correction equations, the difference is low leading to slow convergence.

\section{Conclusion and future work}
In this paper, we have proposed a modification to the subspace expansion phase in Jacobi-Davidson method for computing approximate eigenvalues of a large sparse matrix. The modification uses the heuristic of least squares. Theoretically, the modification has the advantage that it is still applicable to the cases when the correction equation obtained in Jacobi-Davidson method results in a singular system matrix. Further, the modified method is theoretically equivalent to the Alternating Rayleigh quotient iteration, which converges globally and proposed by B.N. Parlett in  \cite{grqi}. To check whether the modification performs well computationally, we have considered many bench mark examples. It is observed that the over all performance of the modified algorithm is well comparable with the Jacobi-Davidson method. Along with the required eigenvalue, approximations to other eigenvalues are also obtained. In case the proposed modified method exhibits slow convergence, compared to Jacobi-Davidson method, it gives good approximation to many eigenvalues including the desired one. The slow convergence is attributed to the clustering of eigenvalues near the current approximation. While Jacobi-Davidson method jumps away from this cluster resulting in an approximation to a different eigenvalue than the desired one, the proposed method approaches slowly towards the desired eigenvalue. It has been observed that when stagnation occurs in the modified method, an approximation to the right singular vector of a matrix $(A-\theta I)$ is obtained. This is observed in Example~\ref{eg2}, where the norm of the residual vector is a singular value. When the norm of the residual coincides with a smallest singular value, the obtained vector is likely to be a good approximation to an eigenvector of the matrix, associated with an approximate eigenvalue $\theta.$ The theory about this coincidence is yet to be developed. 

\end{document}